\definecolor{indigo}{HTML}{492DA5}
\g@addto@macro\bfseries{\boldmath}\makeatother
\setlist[enumerate]{font=\normalfont}
\crefname{enumi}{}{}
\crefname{enumii}{}{}
\crefname{condition}{condition}{conditions}
\numberwithin{equation}{section}
\crefname{equation}{equation}{equations}
\newtheorem{theorem}{Theorem}[section]
\newtheorem{thm}[theorem]{Theorem}
\crefname{thm}{Theorem}{Theorems}
\newtheorem{lem}[theorem]{Lemma}
\crefname{lem}{Lemma}{Lemmas}
\newtheorem{prop}[theorem]{Proposition}
\crefname{prop}{Proposition}{Propositions}
\newtheorem{cor}[theorem]{Corollary}
\crefname{cor}{Corollary}{Corollaries}
\theoremstyle{definition}
\crefname{dfn}{Definition}{Definitions}
\crefname{notation}{Notation}{Notations}
\theoremstyle{remark}
\newtheorem{rmk}[theorem]{Remark}
\crefname{rmk}{Remark}{Remarks}
\newtheorem{qu}[theorem]{Question}
\crefname{qu}{Question}{Questions}
\crefname{example}{Example}{Examples}
\numberwithin{equation}{section}
\numberwithin{theorem}{section}
\newcommand{\Aa}{\mathcal{A}}
\newcommand{\Bb}{\mathcal{B}}
\newcommand{\Ll}{\mathcal{L}}
\newcommand*{\nb}{\nobreakdash}
\newcommand{\Cst}{\mathrm{C}^*}
\newcommand{\ff}{\mathrm{{f}}} 
\newcommand{\rr}{\mathrm{{r}}} 
\newcommand{\dd}{\mathrm{{d}}}
\newcommand{\supp}{\operatorname{supp}}
\newcommand{\lsp}{\operatorname{span}}
\newcommand{\id}{\operatorname{id}}
\newcommand{\range}{\operatorname{range}}
\newcommand{\lv}{\lVert}
\newcommand{\rv}{\rVert}
\title[Inclusions of $\Cst$\nb-algebras of graded groupoids]{Inclusions of $\Cst$\nb-algebras of graded groupoids}
\author[Armstrong]{Becky Armstrong}
\author[Clark]{Lisa Orloff Clark}
\author[an Huef]{Astrid an Huef}
\address[B.\ Armstrong]{Mathematical Institute, University of M\"unster, Einsteinstr.\ 62, 48149 M\"unster, GERMANY}
\email{\href{mailto:becky.armstrong@uni-muenster.de}{becky.armstrong@uni-muenster.de}}
\address[L.O.\ Clark and A.\ an Huef]{School of Mathematics and Statistics, Victoria University of Wellington, PO Box 600, Wellington 6140, NEW ZEALAND}
\email{\href{mailto:lisa.clark@vuw.ac.nz}{lisa.clark}, \href{mailto:astrid.anhuef@vuw.ac.nz}{astrid.anhuef@vuw.ac.nz}}
\date{\today}
\keywords{Groupoid, $\Cst$\nb-algebra, topological grading, isometric inclusion}
\subjclass{46L05}
\thanks{Lisa Clark and Astrid an Huef were supported by the Marsden Fund of the Royal Society of New Zealand (grant number 18-VUW-056). Astrid an Huef thanks Iain Raeburn for telling her about \cite[Theorem~6.2]{KQR} and thanks Camila Sehnem for helpful conversations. The authors also thank Alcides Buss for telling them about \cite{BEM} and \cite{BM}, which lead to a strengthening of the results.}
\begin{document}

\begin{abstract}
We consider a locally compact Hausdorff groupoid $G$ which is graded over a discrete group. Then the fibre over the identity is an open and closed subgroupoid $G_e$. We show that both the full and reduced $\Cst$\nb-algebras of this subgroupoid embed isometrically into the full and reduced $\Cst$\nb-algebras of $G$; this extends a theorem of Kaliszewski--Quigg--Raeburn from the \'etale to the non-\'etale setting. As an application we show that the full and reduced $\Cst$\nb-algebras of $G$ are topologically graded in the sense of Exel, and we discuss the full and reduced $\Cst$\nb-algebras of the associated bundles.
\end{abstract}

\maketitle

\section{Introduction}
We consider a locally compact Hausdorff groupoid $G$ which is graded over a discrete group $\Gamma$. The grading is implemented by a continuous homomorphism $c\colon G\to \Gamma$. In particular, the fibre $G_e\coloneqq c^{-1}(\{e\})$ over the identity $e$ is an open and closed subgroupoid of $G$. Understanding $G_e$ and its $\Cst$\nb-algebras can lead to a better understanding of $G$ and its $\Cst$\nb-algebras. For example, Spielberg proved in \cite[Proposition~1.3]{Spielberg-LMS} that for an \'etale groupoid $G$ \and an amenable group $\Gamma$, if $G_e$ is amenable then so is $G$; the converse follows from \cite[Corollary~5.3.22]{A-DR}. Renault and Williams have generalised Spielberg's result to locally compact Hausdorff groupoids that are second-countable \cite{RW-TAMS}.

When $G$ is \'etale, it follows from \cite{CD} and \cite{RRS} that the reduced $\Cst$\nb-algebra $\Cst_\rr(G)$ is graded in the sense of \cite[Definition~19.2]{Exel's book} and that the conditional expectation implementing the grading is faithful. A crucial ingredient in the proof is that inclusion of $C_c(G_e)$, the continuous and compactly supported complex-valued functions on the fibre $G_e$, extends to an injective homomorphism of $\Cst_\rr(G_e)$ into $\Cst_\rr(G)$ (see, for example, \cite[Lemma~3]{RRS}).

That inclusion also extends to an injective homomorphism with respect to the full norms was proven, again only for \'etale groupoids, by Kaliszewski, Quigg, and Raeburn in \cite[Theorem~6.2]{KQR}. The strategy of \cite{KQR} was to build a pre-Hilbert $C_c(G_e)$-bimodule $X_0$ from $C_c(G)$, and to show that left multiplication by $C_c(G)$ extends to give a homomorphism $L$ of $\Cst(G)$ into the $\Cst$\nb-algebra $\Ll(X)$ of adjointable operators on the completion $X$ of $X_0$. Several norm inequalities then show that inclusion is isometric on $C_c(G)$, and hence extends to an isometric homomorphism. A modern take on the clever proof in \cite{KQR} is that even for non-\'etale groupoids, a ``pre-representation'' of $C_c(G)$ as operators on $X_0$ extends to an $I$-norm bounded homomorphism \cite[Corollary~6.2]{BHM}.

Here we generalise \cite[Theorem~6.2]{KQR} and \cite[Lemma~3]{RRS} to non-\'etale groupoids with a Haar system. Our techniques depend on the existence of a two-sided approximate identity $\{e_\mu\}$ for $\Cst(G)$ in $C_c(G)$ with $\lv e_\mu\rv\le 1$. To get such an approximate identity we invoke \cite[Proposition~1.49]{Dana's book}, which assumes that the unit space $G^{(0)}$ of $G$ is paracompact. Therefore we assume that our groupoids are second-countable locally compact Hausdorff: then so are their unit spaces and hence these spaces are paracompact as needed. Our main theorem is the following.

\begin{thm} \label{thm-inclusions}
Let $G$ be a second-countable locally compact Hausdorff groupoid with a Haar system. Let $\Gamma$ be a discrete group and let $c\colon G\to\Gamma$ be a continuous homomorphism. Set $G_e\coloneqq c^{-1}(\{e\})$. Define $i\colon C_c(G_e)\to C_c(G)$ by
\[
i(f)(x) = \begin{cases}f(x)&\text{if $x \in G_e$}\\
0&\text{else}.
\end{cases}
\]
Then $i$ extends to injective homomorphisms $i_\ff\colon \Cst(G_e)\to \Cst(G)$ and $i_\rr\colon \Cst_\rr(G_e)\to \Cst_\rr(G)$.
\end{thm}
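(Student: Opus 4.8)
The plan is to establish the reduced inclusion by comparing regular representations and the full inclusion by a Hilbert-bimodule argument in the spirit of \cite{KQR}. First I would record some standing facts. Since $c$ is a homomorphism it kills units, so $G^{(0)}\subseteq G_e$ and $G$ and $G_e$ share a unit space; since $\Gamma$ is discrete and $c$ is continuous, each $c^{-1}(\gamma)$ is open and closed, so the Haar system of $G$ restricts to one on $G_e$ and $i$ extends functions by zero. A direct calculation using that $G_e$ is a subgroupoid shows that for $f,g\in C_c(G_e)$ the functions $i(f)*i(g)$ and $i(f)^*$ are supported in $G_e$ and restrict there to $f*g$ and $f^*$; hence $i$ is a $*$\nb-homomorphism, and because the $I$\nb-norm is computed fibrewise and $G_e$ is open and closed, $i$ also preserves the $I$\nb-norm.

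For $i_\rr$ I would fix $u\in G^{(0)}$ and analyse the regular representation $\pi_u^G$ realised by left convolution on $L^2(G_u)$, where $G_u=\dd^{-1}(u)$. The grading gives an orthogonal decomposition $L^2(G_u)=\bigoplus_{\gamma\in\Gamma}L^2(G_u\cap c^{-1}(\gamma))$ into closed subspaces, and since $i(f)$ is supported in $G_e$ left convolution by $i(f)$ preserves the grade, so $\pi_u^G(i(f))$ is block diagonal. The crucial point is that left translation acts \emph{transitively} on each nonempty $G_u\cap c^{-1}(\gamma)$: if $x,x'$ lie in it then $x'x^{-1}\in G_e$ and $x'=(x'x^{-1})x$. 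Fixing a base point $x_0$ therefore identifies $L^2(G_u\cap c^{-1}(\gamma))$ with $L^2((G_e)_{\rr(x_0)})$ and intertwines the corresponding block of $\pi_u^G(i(f))$ with the regular representation $\pi_{\rr(x_0)}^{G_e}(f)$ of $G_e$. The block for $\gamma=e$ has base point $u$ and equals $\pi_u^{G_e}(f)$, so $\lv\pi_u^{G_e}(f)\rv\le\lv\pi_u^G(i(f))\rv$, while every nonempty block is some $\pi_v^{G_e}(f)$ and so has norm at most $\lv f\rv_{\Cst_\rr(G_e)}$. Taking suprema over $u$ gives $\lv i(f)\rv_{\Cst_\rr(G)}=\lv f\rv_{\Cst_\rr(G_e)}$, so $i_\rr$ is well defined, isometric, and injective.

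For $i_\ff$ I would turn $X_0=C_c(G)$ into a right pre-Hilbert $C_c(G_e)$\nb-module, with right action by convolution and inner product $\langle g,h\rangle=(g^* * h)|_{G_e}$, and complete it to a right Hilbert $\Cst(G_e)$\nb-module $X$. Left convolution by elements of $C_c(G)$ gives $C_c(G_e)$\nb-linear operators on $X_0$, and the adjoint of left convolution by $g$ is left convolution by $g^*$, so this is a pre-representation in the sense of \cite{BHM}; by \cite[Corollary~6.2]{BHM} it extends to an $I$\nb-norm bounded homomorphism, hence to a homomorphism $L\colon\Cst(G)\to\Ll(X)$. For the lower bound I would observe that for $f\in C_c(G_e)$ one has $\langle i(f),i(f)\rangle=f^* * f$, so $C_c(G_e)\hookrightarrow X_0$ is isometric for the module norms and its closure $Y$ is a copy of the standard right Hilbert module $\Cst(G_e)$ over itself. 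Since $L(i(f))$ maps $Y$ into $Y$ and acts there as left multiplication by $f$, which has norm $\lv f\rv_{\Cst(G_e)}$, I obtain $\lv i(f)\rv_{\Cst(G)}\ge\lv L(i(f))\rv\ge\lv f\rv_{\Cst(G_e)}$. The reverse inequality is immediate: $f\mapsto\lv i(f)\rv_{\Cst(G)}$ is a $\Cst$\nb-seminorm on $C_c(G_e)$ dominated by the $I$\nb-norm, hence by the universal norm $\lv f\rv_{\Cst(G_e)}$. Thus $i_\ff$ is isometric and injective.

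The main obstacle lies entirely in the full case and has two parts. One must first check that $X_0$ is genuinely a pre-Hilbert module, i.e.\ that $\langle g,g\rangle=(g^* * g)|_{G_e}$ is positive in $\Cst(G_e)$ — equivalently, that restriction to $G_e$ is a positive conditional expectation. Decomposing $g$ over the grading, the $e$\nb-part of $g^* * g$ is $\sum_\gamma (g_\gamma^* * g_\gamma)|_{G_e}$, which reduces the question to positivity of the $C_c(G_e)$\nb-valued form on each $C_c(c^{-1}(\gamma))$ regarded as a $G_e$\nb-bimodule, and this is where I expect the normalised approximate identity $\{e_\mu\}\subseteq C_c(G)$ (and, if one instead realises the lower bound through an induced representation, the relation $\langle e_\mu,e_\mu\rangle\to 1$) to be needed. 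Secondly, verifying the hypotheses of \cite[Corollary~6.2]{BHM} so that left convolution really extends to $L$ is the analytic heart of the argument; once $X$ and $L$ are in hand, the norm comparisons above are routine. The reduced case needs neither ingredient, which is why I would treat it separately and first.
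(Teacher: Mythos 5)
Your proposal follows essentially the same two-pronged route as the paper, and your reduced-norm argument is complete: block-diagonalising the regular representation $\pi_u$ over the grading of $Gu$ and using a base point in each nonempty fibre $(G_\gamma)u$ to intertwine each block with a regular representation of $G_e$ at $r(z_{u,\gamma})$ is precisely the paper's proof of \cref{prop-part2}, adapted from \cite[Lemma~8.1]{BEM}. The full case also matches the paper's strategy (\cref{prop-right-Hilbert,cor-right-Hilbert,prop-L,prop-part1}): same module, same inner product (note $(g^* * h)|_{G_e} = \sum_\gamma g_\gamma^* h_\gamma$, so your formula agrees with \labelcref{eq-innerproduct}), same appeal to \cite[Corollary~6.2]{BHM}, and your lower bound via the copy of the standard module $\Cst(G_e)$ sitting inside $X$ is an equivalent repackaging of the paper's one-vector estimate $\lv L_{i(f)} \rv \ge \lv i(f)i(f)^* \rv_{\Cst(G_e)}/\lv f \rv = \lv f \rv$ using the unit vector $i(f)^*/\lv f\rv$; your upper bound via $I$\nb-norm domination is the paper's first paragraph of \cref{prop-part1}.

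The two steps you defer are exactly where the paper spends its effort, so let me record how they are filled. Positivity of the inner product is dispatched in \cref{prop-right-Hilbert} by the decomposition $\langle a\,,\,a\rangle = \sum_{\gamma}a_\gamma^* a_\gamma$ --- the same reduction to graded summands you describe --- with each summand the restriction to $G_e$ of the manifestly positive element $i(a_\gamma)^* * i(a_\gamma)$; no approximate identity appears there, so your guess that $\{e_\mu\}$ is needed for positivity is misplaced. As you correctly suspect, the hypotheses of \cite[Definition~5.1]{BHM} are not automatic from adjointability alone: the paper checks three conditions, namely (1) inductive-limit continuity of $a \mapsto \langle b\,,\,L_a(d)\rangle_{\Cst(G_e)}$, proved via \cref{lem-ilt} together with the Cauchy--Schwarz inequality; (2) the adjoint relation $\langle L_{a_1}(b)\,,\,L_{a_2}(d)\rangle = \langle b\,,\,L_{a_1^*a_2}(d)\rangle$, quoted from the computation on page~431 of \cite{KQR}; and (3) nondegeneracy, i.e.\ density of $\{L_a(b)\}$ in $X$ --- and it is here, not in the positivity check, that the approximate identity $\{e_\mu\} \subset C_c(G_e)$ with $\lv e_\mu \rv \le 1$ from \cite[Proposition~1.49]{Dana's book} earns its keep (this is also where second countability enters, via paracompactness of the unit space): one estimates $\lv a\,i(e_\mu) - a \rv_{\Cst(G_e)}$ by splitting $a$ into its finitely many nonzero graded components. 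With those three verifications inserted, your outline closes up into the paper's proof of \cref{thm-inclusions}.
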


We then show in \cref{cor-app} that both $\Cst(G)$ and $\Cst_\rr(G)$ are topologically graded with conditional expectations induced by restriction of functions to $G_e$. Finally, we discuss what we know about the full and reduced $\Cst$\nb-algebras of the associated Fell bundles.

In a preliminary version of this paper, our results in the reduced setting were weaker. However, after posting the paper on the ArXiv, we learned about the more general results for semigroup actions on a groupoid in \cite[Theorem~5.8]{BM} (the full inclusion) and \cite[Lemma~8.1]{BEM} (the reduced inclusion). Although \cite{BEM} and \cite{BM} have standing assumptions that the groupoids are strongly graded (see \cite[Theorem~3.14]{BM}), the proof techniques used in \cite[Theorem~5.8]{BM} and \cite[Lemma~8.1]{BEM} can be applied to more general graded groupoids. In particular, we were able to adapt the proof of \cite[Lemma~8.1]{BEM} to our setting and thereby strengthen our results.

\subsection*{Set-up} \label{sec-set-up}

Throughout, $G$ is a second-countable locally compact Hausdorff groupoid with a left Haar system $\lambda = \{\lambda^u : u \in G^{(0)}\}$, $\Gamma$ is a discrete group, and $c\colon G\to \Gamma$ is a continuous homomorphism (sometimes called a \emph{cocycle}). Since the image of $c$ is a subgroup of $\Gamma$, we may assume without loss of generality that $c$ is surjective. For $\gamma\in\Gamma$, we set $G_\gamma \coloneqq c^{-1}(\{\gamma\})$. We \emph{choose} the Haar system on the subgroupoid $G_e$ of $G$ to be the restriction $\{\lambda^u|_{G_e} : u \in G^{(0)}\}$ of $\lambda$. We write just $\lambda^u$ for $\lambda^u|_{G_e}$.

We refer to Section~1.4 of \cite{Dana's book} for the definitions of the $I$-norm $\lv\cdot\rv_I$, and of the full and reduced $\Cst$\nb-algebras $\Cst(G)$ and $\Cst_\rr(G)$, respectively. We write $q^G\colon\Cst(G)\to \Cst_\rr(G)$ for the quotient map from the full $\Cst$\nb-algebra $\Cst(G)$ to the reduced $\Cst$\nb-algebra $\Cst_\rr(G)$ of $G$. To distinguish the norms we use $\lv\cdot\rv_\rr$ for the norm on $\Cst_\rr(G)$ and just $\lv\cdot\rv$ for the norm on $\Cst(G)$.

Let $a \in C_c(G)$. For each $\gamma\in\Gamma$, we set $a_\gamma\coloneqq a|_{G_\gamma}$. Since $G_\gamma$ is open and closed, $a_\gamma$ is continuous on $G_\gamma$ with compact support. There exists a finite subset $F$ of $\Gamma$ such that $\cup_{\gamma \in F} G_\gamma$ is an open cover of the support $\supp(a)$ of $a$, and we write $a = \sum_{\gamma\in\Gamma} a_\gamma$, remembering that there are only finitely many nonzero summands since $a_\gamma$ is zero unless $\gamma \in F$.

\section{Isometric inclusions}

In this section we prove \cref{thm-inclusions}, using ideas from \cite[Theorem~6.2]{KQR}. In \cite{KQR}, Kaliszewski, Quigg, and Raeburn construct a groupoid equivalence between a subgroupoid of the skew-product groupoid $G\times_c\Gamma$ and $G_e$, invoke the Equivalence Theorem of \cite{MRW} to make $C_c(G)$ into a pre-Hilbert $C_c(G_e)$-module, and show that left multiplication by $C_c(G)$ acts by bounded adjointable operators on the completion $X$ of this module. Our first observation, in \cref{prop-right-Hilbert} below, is that we do not need to invoke the Equivalence Theorem to get $X$: informed by the \'etale result, we write down analogous formulae for an action and an inner product, and verify that they work.

\begin{prop} \label{prop-right-Hilbert}
Let $a,b \in C_c(G)$ and write $a = \sum_{\gamma \in \Gamma}a_\gamma, \ b = \sum_{\gamma \in \Gamma}b_\gamma$ where $a_\gamma\coloneqq a|_{G_\gamma}, \ b_\gamma\coloneqq b|_{G_\gamma}$, and let $g \in C_c(G_e)$. For $\gamma \in \Gamma$, let $i$ be the natural inclusion of elements of $C_c(G_\gamma)$ in $C_c(G)$ by extension by zero. For $\gamma \in \Gamma$, define
$a_\gamma^*$ and $a_\gamma^*b_\gamma$ by
\begin{equation} \label{eq-defn-restricted-inv-conv}
a_\gamma^*(y) = \overline{a_\gamma(y^{-1})}\quad\text{and}\quad a_\gamma^* b_\gamma(x)\coloneqq\int_{G} i(a_\gamma)^*(z) \, i(b_\gamma)(z^{-1}x)\, \dd\lambda^{r(x)}(z),
\end{equation}
where $x \in G_e$ and $y \in G_{\gamma^{-1}}$.
Then $a_\gamma^* \in C_c(G_\gamma)$ and $a_\gamma^* b_\gamma \in C_c(G_e)$. The formulae
\begin{gather}
(a\cdot g)(x) = \int_{G_e} a(xn) \, g(n^{-1})\, \dd\lambda^{s(x)}(n) \quad \text{ for } x \in G, \ and \label{eq-action}\\
\langle a\,,\, b\rangle = \sum_{\gamma\in\Gamma} a_\gamma^*b_\gamma \label{eq-innerproduct},
\end{gather}
give $C_c(G)$ the structure of an inner-product $C_c(G_e)$-module with $\langle a\,,\, a\rangle\ge 0$ in both $\Cst_\rr(G_e)$ and $\Cst(G_e)$.
\end{prop}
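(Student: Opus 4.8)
The plan is to recognise every operation in the statement as an instance of convolution and involution in the ambient $*$\nb-algebra $C_c(G)$, and then to read off the module and inner-product axioms from the grading. Write $f*h$ and $f^*$ for the convolution and involution on $C_c(G)$ determined by $\lambda$, so that $f^*(x)=\overline{f(x^{-1})}$. Because $c$ is a homomorphism and each $G_\gamma$ is open and closed, convolution respects the grading: $\supp(f*h)\subseteq(\supp f)(\supp h)$ and $\supp(f^*)=(\supp f)^{-1}$, so for $f\in C_c(G_\alpha)$ and $h\in C_c(G_\beta)$ we have $i(f)*i(h)=i(f*h)$ and $i(f)^*=i(f^*)$. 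The support and continuity claims are then immediate: inversion is a homeomorphism $G_\gamma\to G_{\gamma^{-1}}$, so $a_\gamma^*\in C_c(G_{\gamma^{-1}})$; and $a_\gamma^*b_\gamma=\bigl(i(a_\gamma)^**i(b_\gamma)\bigr)_e$ is the restriction to $G_e$ of a function in $C_c(G)$ supported in $G_{\gamma^{-1}}G_\gamma\subseteq G_e$, hence lies in $C_c(G_e)$ (here $(\,\cdot\,)_e$ denotes restriction to $G_e$).

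I would next record the two identities that drive everything: $a\cdot g=a*i(g)$ and $\langle a,b\rangle=(a^**b)_e$. The first follows by the change of variables $z=xn$ and left-invariance of $\lambda$, and shows $a\cdot g\in C_c(G)$; the right-module axiom $(a\cdot g)\cdot h=a\cdot(g*h)$ is then just associativity of $*$ together with $i(g)*i(h)=i(g*h)$. The second identity holds because the degree-$e$ part of $a^**b=\sum_{\alpha,\beta}i(a_\alpha)^**i(b_\beta)$ collects exactly the terms with $\alpha^{-1}\beta=e$. Since $(\,\cdot\,)_e$ is $*$\nb-preserving and is a $C_c(G_e)$-bimodule map on $C_c(G)$ (because $i(g)$ has degree $e$), conjugate-symmetry $\langle a,b\rangle^*=\langle b,a\rangle$, linearity, and compatibility $\langle a,b\cdot g\rangle=\langle a,b\rangle * g$ all drop out of the corresponding identities for $*$ on $C_c(G)$.

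The crux is positivity. For the reduced claim I would argue one representation at a time. Fix $u\in G^{(0)}$ and regard the regular representation $\pi_u^{G_e}$ of $C_c(G_e)$ on $L^2((G_e)_u,\lambda_u)$ as the restriction of the regular representation $\pi_u^{G}$ of $C_c(G)$ on $L^2(G_u,\lambda_u)$ to the invariant subspace $L^2((G_e)_u)$; here I use that $(G_e)_u$ is open and closed in $G_u$ and that the Haar system on $G_e$ is the restriction of $\lambda$. The partition $G_u=\bigsqcup_\gamma(G_\gamma)_u$ grades $L^2(G_u)=\bigoplus_\gamma L^2((G_\gamma)_u)$, and the degree calculus shows $\pi_u^{G}(i(a_\gamma))$ shifts this grading by $\gamma$. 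Writing $P_e$ for the projection onto $L^2((G_e)_u)$, the cross terms vanish and one obtains the key identity
\[
\pi_u^{G_e}(\langle a,a\rangle)=P_e\,\pi_u^{G}(a^**a)\,P_e=\sum_{\gamma\in\Gamma}\bigl(\pi_u^{G}(i(a_\gamma))P_e\bigr)^*\bigl(\pi_u^{G}(i(a_\gamma))P_e\bigr)\ge 0 .
\]
As $u$ varies the representations $\pi_u^{G_e}$ are jointly faithful on $\Cst_\rr(G_e)$, so $\langle a,a\rangle\ge 0$ there.

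For the full claim I would run the same compression inside an arbitrary representation. Given a representation of $C_c(G_e)$, the disintegration theorem presents it as the integrated form of a unitary representation of the groupoid $G_e$; I then induce this to a representation $\widetilde\pi$ of $C_c(G)$, realised on square-integrable $G_e$\nb-equivariant sections over $G$. Since $c(xm)=c(x)$ for $m\in G_e$, the cocycle descends to $G/G_e$ and grades the representation space $\mathcal{K}=\bigoplus_\gamma\mathcal{K}_\gamma$, with $\mathcal{K}_e$ carrying a copy of the original representation $\pi$ and with $\widetilde\pi(i(a_\gamma))$ again of degree $\gamma$. The identical compression gives $\pi(\langle a,a\rangle)=P_e\,\widetilde\pi(a^**a)\,P_e\ge 0$, and as $\pi$ ranges over all such representations this yields $\langle a,a\rangle\ge 0$ in $\Cst(G_e)$. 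I expect the main obstacle to be precisely this full case: unlike the reduced setting there is no ambient faithful family of concrete representations to compress into, so one must construct the induced representation of $C_c(G)$ from a disintegrated representation of $G_e$ and verify that compressing $\widetilde\pi\circ i$ to the degree-$e$ subspace recovers the original representation, keeping track of the quasi-invariant measure and the modular cocycle throughout.
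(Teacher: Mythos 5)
The algebraic half of your proposal---recognising every operation as convolution and involution in the $*$\nb-algebra $C_c(G)$, the identities $a\cdot g = a*i(g)$ and $\langle a,b\rangle = (a^* * b)|_{G_e}$, and the module and inner-product axioms via associativity and the grading---is exactly the paper's argument (and your $a_\gamma^* \in C_c(G_{\gamma^{-1}})$ is the correct reading of the statement's slip $a_\gamma^*\in C_c(G_\gamma)$). Where you part ways with the paper is positivity, and here the paper uses no representation theory at all: it observes that $\langle a,a\rangle = \sum_{\gamma\in\Gamma} a_\gamma^* a_\gamma$ is a sum of positive elements, each summand being (the restriction to $G_e$ of) the square $i(a_\gamma)^* * i(a_\gamma)$, and it obtains definiteness because a vanishing sum of positive elements has vanishing summands, whence each $a_\gamma = 0$. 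Your reduced-case compression argument is nevertheless correct and self-contained: the decomposition $L^2(Gu) = \bigoplus_{\gamma\in\Gamma} L^2((G_\gamma)u)$, the fact that $\pi_u(i(a_\gamma))$ shifts the grading so the cross terms die under $P_e$, and joint faithfulness of the family $\{\pi_u^e\}$ are all sound; indeed this is essentially the same unitary decomposition the paper deploys later, in \cref{prop-part2} (following \cite{BEM}) and in \cref{prop-QP}, where $\pi_u^e(a|_{G_e}) = W\pi_u(a)W$. So for the reduced claim you have a valid, if heavier, alternative.

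The genuine gap is your full case. What you offer there is a plan, not a proof: from an arbitrary representation of $\Cst(G_e)$, disintegrated by Renault's theorem into a unitary representation of the groupoid $G_e$ over a quasi-invariant measure, you must \emph{construct} the induced representation $\widetilde\pi$ of $C_c(G)$ on equivariant $L^2$\nb-sections, verify that $\widetilde\pi(f)$ is bounded (an $I$-norm estimate), handle the measure on the quotient of $G$ by the right $G_e$\nb-action and the modular cocycle of a measure that is quasi-invariant only under $G_e$, establish the grading $\mathcal{K} = \bigoplus_\gamma \mathcal{K}_\gamma$, and prove the imprimitivity-type identification of the degree-$e$ corner of $\widetilde\pi\circ i$ with the original representation $\pi$. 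None of this bookkeeping is carried out, and you concede as much; but it is precisely where the work lives, so the full half of the proposition remains unproved as written. There is also a circularity hazard you should confront explicitly: the standard mechanism for inducing representations from $\Cst(G_e)$ to $\Cst(G)$ is Rieffel induction via exactly the Hilbert module $X$ whose positivity you are in the middle of establishing, so the induction must be done by hand at the groupoid-measure level---in effect re-proving a nontrivial slice of the disintegration/induction machinery---before it can be used here. Your instinct that the full completion is the delicate point is understandable, but the fix is either to carry out that construction in detail or to take the paper's short algebraic route, which avoids representations entirely at this stage.
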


\begin{proof}
For $\gamma \in \Gamma$, the support of the involution $i(a_\gamma)^*$ and the convolution $i(a_\gamma)^* * i(b_\gamma)$ are in $G_{\gamma^{-1}}$ and $G_e$, respectively, and \labelcref{eq-defn-restricted-inv-conv} are the formulae for their restrictions to $G_{\gamma^{-1}}$ and $G_e$. Similarly, $i(g)$ has support in $G_e$ and the formula \labelcref{eq-action} is the formula for the convolution in $C_c(G)$ of $a$ and $i(g)$.

Let $a \in C_c(G)$ and $g, h \in C_c(G_e)$. Since $C_c(G)$ is a $*$-algebra with respect to convolution and involution by \cite[Proposition~1.34]{Dana's book}, it follows, first, that $a\cdot g = a*i(g)$ is again in $C_c(G)$ and, second, that
\[
a\cdot (gh) = a*i(g*h) = a*\big(i(g)*i(h)\big) = \big(a*i(g)\big)*i(h) = (a\cdot g)\cdot h.
\]
Thus \labelcref{eq-action} gives an action of $C_c(G_e)$ on $C_c(G)$.

Next we check that \labelcref{eq-innerproduct} is an inner product. Linearity in the second variable follows from the linearity of convolution in $C_c(G)$, the linearity of $i$, and the linearity of restriction of functions to $C_c(G_e)$. Since $g \in C_c(G_e)$ we have
\[
\langle a\,,\, b\cdot g\rangle = \sum_{\gamma\in\Gamma} a_\gamma^*(b_\gamma\cdot g).
\]
Thus, to see that $\langle a\,,\, b\cdot g\rangle = \langle a\,,\, b\rangle g$, it suffices to show that $a^*(b\cdot g) = (a^*b)g$ when $a,b \in C_c(G)$ with support in $G_\gamma$, and, again, this follows from the associativity of convolution in $C_c(G)$.

We have $\langle a\,,\, b\rangle^* = \left(\sum_{\gamma \in \Gamma} a_\gamma^*b_\gamma \right)^{\!*} = \sum_{\gamma \in \Gamma} b_\gamma^*a_\gamma = \langle b\,,\, a\rangle$. Since a sum of positive elements is positive, we get that $\langle a\,,\, a\rangle = \sum_{\gamma \in \Gamma} a_\gamma^*a_\gamma$ is positive in both $\Cst_\rr(G)$ and $\Cst(G)$. Moreover, if a sum of positive elements is zero, then the summands must be zero, and so $\langle a\,,\, a\rangle = 0$ implies that $a = 0$ in both $\Cst_\rr(G)$ and $\Cst(G)$. Thus \labelcref{eq-innerproduct} is an inner product.
\end{proof}

We will indicate in which $\Cst$\nb-algebra the inner product $\langle \cdot\,,\, \cdot\rangle$ of \labelcref{eq-innerproduct} is taking values using subscripts: $\langle \cdot\,,\, \cdot\rangle_{\Cst(G_e)}$ and $\langle \cdot\,,\, \cdot\rangle_{\Cst_\rr(G_e)}$. Of course, if $a,b \in C_c(G)$, then $\langle a\,,\, b\rangle_{\Cst(G_e)} = \langle a\,,\, b\rangle_{\Cst_\rr(G_e)}$.

\begin{cor} \label{cor-right-Hilbert}
There are completions $X$ and $Y$ of $C_c(G)$ such that $X$ is a right-Hilbert $\Cst(G_e)$-module and $Y$ is a right-Hilbert $\Cst_\rr(G_e)$-module, with formulae for the actions and inner products given by \labelcref{eq-action,eq-innerproduct}. Let $q^{G_e}\colon \Cst(G_e)\to \Cst_\rr(G_e)$ be the quotient map and consider the closed submodule
\[
X_{\ker q^{G_e}} \coloneqq\{ b \in X : \langle a\,,\, b\rangle_{\Cst(G_e)} \in \ker q^{G_e}\text{\ for all\ }a \in X\}
\]
of $X$. Then $Y$ is the quotient $X/X_{\ker q^{G_e}}$.
\end{cor}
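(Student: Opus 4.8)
The plan is to obtain $X$ and $Y$ by the usual completion of a pre-Hilbert module, and then to realise $Y$ as a quotient of $X$ through a canonical contraction whose kernel is exactly $X_{\ker q^{G_e}}$.

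First I would produce the two completions. By \cref{prop-right-Hilbert}, $C_c(G)$ carries a $C_c(G_e)$-valued inner product that is positive and definite both in $\Cst(G_e)$ and in $\Cst_\rr(G_e)$. Viewing this inner product as taking values in $\Cst(G_e)$ (respectively $\Cst_\rr(G_e)$) and completing $C_c(G)$ with respect to $\lv a\rv_X\coloneqq\lv\langle a,a\rangle_{\Cst(G_e)}\rv^{1/2}$ (respectively $\lv a\rv_Y\coloneqq\lv\langle a,a\rangle_{\Cst_\rr(G_e)}\rv^{1/2}$), and then extending the $C_c(G_e)$-action to the completed coefficient algebra, gives by the standard theory of Hilbert modules the right-Hilbert $\Cst(G_e)$-module $X$ and the right-Hilbert $\Cst_\rr(G_e)$-module $Y$, with actions and inner products given on $C_c(G)$ by \labelcref{eq-action,eq-innerproduct}.

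Since $q^{G_e}$ is the identity on $C_c(G_e)$ and norm-decreasing, for $a \in C_c(G)$ we have $\langle a,a\rangle_{\Cst_\rr(G_e)} = q^{G_e}(\langle a,a\rangle_{\Cst(G_e)})$ and hence $\lv a\rv_Y \le \lv a\rv_X$. Thus the identity map on $C_c(G)$ extends to a contraction $\Phi\colon X\to Y$ with dense range, and continuity upgrades the preceding identity to $\langle \Phi a,\Phi b\rangle_{\Cst_\rr(G_e)} = q^{G_e}(\langle a,b\rangle_{\Cst(G_e)})$ for all $a,b \in X$. I would then identify $\ker\Phi$ with $X_{\ker q^{G_e}}$. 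A vector $b \in X$ lies in $\ker\Phi$ precisely when $\langle b,b\rangle_{\Cst_\rr(G_e)} = 0$, i.e.\ when $\langle b,b\rangle_{\Cst(G_e)}\in\ker q^{G_e}$; the Cauchy--Schwarz inequality $\langle a,b\rangle_{\Cst(G_e)}\langle b,a\rangle_{\Cst(G_e)} \le \lv\langle b,b\rangle_{\Cst(G_e)}\rv\,\langle a,a\rangle_{\Cst(G_e)}$, together with the fact that the closed ideal $\ker q^{G_e}$ contains $z$ whenever it contains $zz^*$, shows that this happens exactly when $\langle a,b\rangle_{\Cst(G_e)}\in\ker q^{G_e}$ for every $a \in X$. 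Hence $\ker\Phi = X_{\ker q^{G_e}}$; in particular $X_{\ker q^{G_e}}$ is a closed submodule.

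Finally, I would invoke the standard description of quotients of Hilbert modules by an ideal: $X/X_{\ker q^{G_e}}$ is a right-Hilbert $(\Cst(G_e)/\ker q^{G_e})$-module, and since $q^{G_e}$ is surjective this coefficient algebra is $\Cst_\rr(G_e)$, with inner product $\langle a + X_{\ker q^{G_e}}, b + X_{\ker q^{G_e}}\rangle = q^{G_e}(\langle a,b\rangle_{\Cst(G_e)})$. The contraction $\Phi$ then descends to an injective $\Cst_\rr(G_e)$-linear map $\overline\Phi\colon X/X_{\ker q^{G_e}}\to Y$ with dense range which, by the intertwining identity above, preserves the inner products and is therefore isometric. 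An isometry of Hilbert modules with complete domain has closed range, so $\overline\Phi$ is a surjective isomorphism and $Y = X/X_{\ker q^{G_e}}$. The step I expect to require the most care is the identification $\ker\Phi = X_{\ker q^{G_e}}$, since it rests on the module Cauchy--Schwarz inequality and on the hereditary behaviour of the ideal $\ker q^{G_e}$; verifying that the quotient genuinely is a Hilbert $\Cst_\rr(G_e)$-module (rather than merely a quotient Banach space) is the other point that must be handled with the standard ideal-quotient machinery.
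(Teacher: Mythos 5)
Your construction of $X$ and $Y$ matches the paper's (completion of the inner-product $C_c(G_e)$-module from \cref{prop-right-Hilbert}, via the standard theory as in \cite[Lemma~2.16]{tfb}), but your identification $Y = X/X_{\ker q^{G_e}}$ takes a genuinely different, more self-contained route: the paper simply cites the Rieffel correspondence \cite[Theorem~3.22]{tfb}, whereas you build the canonical contraction $\Phi\colon X\to Y$ extending the identity on $C_c(G)$, compute $\ker\Phi$, and check directly that the induced map on the quotient is an isometric isomorphism onto $Y$. Your route buys transparency --- it avoids the machinery behind the Rieffel correspondence and shows exactly where $q^{G_e}$ enters --- at the cost of re-proving standard quotient-module facts that the paper gets for free from \cite{tfb}.

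One step is misstated, though it is easily repaired. To deduce from $\langle b\,,\,b\rangle_{\Cst(G_e)}\in\ker q^{G_e}$ that $\langle a\,,\,b\rangle_{\Cst(G_e)}\in\ker q^{G_e}$ for all $a$, you invoke Cauchy--Schwarz in the form $\langle a\,,\,b\rangle\langle b\,,\,a\rangle \le \lv\langle b\,,\,b\rangle\rv\,\langle a\,,\,a\rangle$. That inequality is true but useless here: its right-hand side is a \emph{scalar} multiple of $\langle a\,,\,a\rangle$, which need not lie in $\ker q^{G_e}$, so hereditariness of the ideal gives nothing. You need the other-handed version, $\langle a\,,\,b\rangle^*\langle a\,,\,b\rangle \le \lv\langle a\,,\,a\rangle\rv\,\langle b\,,\,b\rangle$, whose right-hand side \emph{does} lie in the hereditary closed ideal $\ker q^{G_e}$; then $z^*z\in\ker q^{G_e}$ forces $z\in\ker q^{G_e}$, as you say. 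Better still, the intertwining identity you already established, $\langle \Phi a\,,\,\Phi b\rangle_{\Cst_\rr(G_e)} = q^{G_e}\big(\langle a\,,\,b\rangle_{\Cst(G_e)}\big)$, makes Cauchy--Schwarz unnecessary: if $b\in X_{\ker q^{G_e}}$, taking $a = b$ gives $\langle \Phi b\,,\,\Phi b\rangle_{\Cst_\rr(G_e)} = 0$, so $\Phi b = 0$; conversely, $\Phi b = 0$ gives $q^{G_e}\big(\langle a\,,\,b\rangle_{\Cst(G_e)}\big) = \langle \Phi a\,,\,\Phi b\rangle_{\Cst_\rr(G_e)} = 0$ for all $a\in X$. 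With this repair the rest of your argument (the quotient is a Hilbert $\Cst_\rr(G_e)$-module, and an inner-product-preserving map with complete domain and dense range is a surjective isomorphism) goes through.
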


\begin{proof}
By \cref{prop-right-Hilbert}, $C_c(G)$ is an inner-product $C_c(G_e)$-module. By, for example, \cite[Lemma~2.16]{tfb}, the completion $X$ in the norm $\lv a\rv_{\Cst(G_e)}\coloneqq \lv\langle a\,,\, a\rangle_{\Cst(G_e)}\rv^{1/2}$ induced by the norm on $\Cst(G_e)$ is a Hilbert $\Cst(G_e)$-module. Similarly, $Y$ is obtained by completing $C_c(G)$ using $\lv a\rv_{\Cst_\rr(G_e)}\coloneqq \lv\langle a\,,\, a\rangle_{\Cst_\rr(G_e)}\rv^{1/2}$. That $Y = X/X_{\ker q^{G_e}}$ follows from the Rieffel correspondence \cite[Theorem~3.22]{tfb}.
\end{proof}

The following observation is probably well known.

\begin{lem} \label{lem-op-descends}
Let $J$ be a closed ideal of a $\Cst$\nb-algebra $A$. Let $X$ be a right-Hilbert $A$-module and consider the closed submodule
\[
X_J \coloneqq \{ y \in X : \langle x\,,\, y\rangle_{A} \in J\text{\ for all\ }x \in X \}.
\]
If $T\in\Ll(X)$, then $T(X_J)\subset X_J$.
\end{lem}

\begin{proof}
Fix $y \in X_J$. Then $\langle x\,,\, y\rangle_A \in J$ for all $x \in X$. Thus
\[
\langle T(y)\,,\, T(y)\rangle_A = \langle T^*T(y)\,,\, y\rangle_A \in J.
\]
By \cite[Lemma~3.23]{tfb} we have $X_J = \{x \in X : \langle x\,,\, x\rangle_A \in J\}$. Thus $T(y) \in X_J$.
\end{proof}

Next we show that left multiplication by $C_c(G)$ acts by bounded adjointable operators on $X$. We start with a technical observation.

\begin{lem} \label{lem-ilt}
Suppose that $\{a_n\}\subset C_c(G)$ and that $a_n\to 0$ as $n\to\infty$ in the inductive limit topology on $C_c(G)$. Then $\langle a_n\,,\, a_n\rangle_{\Cst(G_e)} \to 0$ and $\langle a_n\,,\, a_n\rangle_{\Cst_\rr(G_e)} \to 0$ in the inductive limit topology on $C_c(G_e)$ as $n \to \infty$.
\end{lem}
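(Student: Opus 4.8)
The plan is to verify directly the two conditions defining convergence to $0$ in the inductive limit topology on $C_c(G_e)$: that the supports of the functions $\langle a_n, a_n\rangle$ are eventually contained in a single compact subset of $G_e$, and that $\lv\langle a_n, a_n\rangle\rv_\infty \to 0$. Because $\langle a_n, a_n\rangle_{\Cst(G_e)}$ and $\langle a_n, a_n\rangle_{\Cst_\rr(G_e)}$ are the same element of $C_c(G_e)$ by the remark following \cref{prop-right-Hilbert}, it suffices to treat a single function, which I write $\langle a_n, a_n\rangle$. By hypothesis there is a compact set $K\subset G$ with $\supp(a_n)\subset K$ for all large $n$, and with $\lv a_n\rv_\infty \to 0$.

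For the support, recall from \cref{prop-right-Hilbert} that $\langle a_n, a_n\rangle = \sum_{\gamma} (a_n)_\gamma^* (a_n)_\gamma$, and that each summand is the restriction to $G_e$ of the convolution $i((a_n)_\gamma)^* * i((a_n)_\gamma)$. As $i((a_n)_\gamma)^*$ and $i((a_n)_\gamma)$ have supports in $K^{-1}$ and $K$ respectively, each summand, and hence the whole (finite) sum, is supported in the compact set $(K^{-1}K)\cap G_e$. This fixed compact set contains $\supp\langle a_n, a_n\rangle$ for all large $n$.

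For the uniform estimate, I would first note that the number of nonzero summands is bounded independently of $n$: since $c$ is continuous and $\Gamma$ is discrete, $c(K)$ is a finite subset of $\Gamma$, and $(a_n)_\gamma = 0$ whenever $\gamma \notin c(K)$. Set $N \coloneqq |c(K)|$. For $x \in G_e$, taking absolute values inside \labelcref{eq-defn-restricted-inv-conv} gives
\[
\big| (a_n)_\gamma^* (a_n)_\gamma(x) \big| \le \int_{G} |a_n(z^{-1})| \, |a_n(z^{-1}x)| \, \dd\lambda^{r(x)}(z) \le \lv a_n\rv_\infty^2 \, \lambda^{r(x)}(K^{-1}),
\]
since the integrand vanishes unless $z^{-1} \in K$. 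Dominating the indicator function of $K^{-1}$ by a function in $C_c(G)$ and using continuity of $u \mapsto \int f\,\dd\lambda^u$ shows that $C_1 \coloneqq \sup_{u \in G^{(0)}} \lambda^u(K^{-1})$ is finite, so summing over the at most $N$ nonzero terms yields $\lv\langle a_n, a_n\rangle\rv_\infty \le N C_1 \lv a_n\rv_\infty^2$, which tends to $0$. Together with the support control, this is precisely convergence in the inductive limit topology.

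The main thing to watch is that every bound is uniform in $n$: the common compact support $K$, the finiteness of $c(K)$, and the bound $C_1$ on the Haar measure of $K^{-1}$ all depend only on $K$ and not on $n$, so no term degenerates as $n \to \infty$. I expect the only mildly delicate point to be the elementary boundedness of $u \mapsto \lambda^u(C)$ over compact $C$; everything else is a direct combination of support bookkeeping for convolutions and the uniform smallness $\lv a_n\rv_\infty \to 0$.
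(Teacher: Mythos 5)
Your proof is correct and follows essentially the same route as the paper's: decompose $a_n$ into its finitely many homogeneous components (finitely many because the common compact support $K$ meets only finitely many of the open sets $G_\gamma$, equivalently $c(K)$ is finite), control the support of each convolution $(a_n)_\gamma^*\,(a_n)_\gamma$ inside a fixed compact subset of $G_e$, and deduce uniform convergence of the finite sum to zero. The only difference is one of detail: where the paper simply asserts that uniform convergence of $(a_n)_\gamma$ passes to the convolution products, you make the underlying estimate explicit via the bound $\sup_{u \in G^{(0)}} \lambda^u(K^{-1}) < \infty$ (justified, as you note, by dominating the indicator of $K^{-1}$ by an element of $C_c(G)$ and using that $u \mapsto \lambda^u(f)$ is continuous and supported in the compact set $r(\supp f)$), which is a correct and standard filling-in of that step.
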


\begin{proof}
Let $K$ be a compact subset of $G$ such that $\supp (a_n)\subset K$ for all $n$. Then there exists a finite subset $F$ of $\Gamma$ such that $K\subset\cup_{\gamma \in F} G_\gamma$. Write $a_n = \sum_{\gamma \in \Gamma} (a_n)_\gamma$, where each $\supp((a_n)_\gamma)\subset G_\gamma$. Indeed, each $(a_n)_\gamma$ has support in the compact set $K_\gamma\coloneqq K\cap G_\gamma$. Notice that $(a_n)_\gamma\to 0$ uniformly, and then so does the convolution product $(a_n)_\gamma^*(a_n)_\gamma$ with support in the compact set $K_\gamma^{-1}K_\gamma$. Thus
\[
\langle a_n\,,\, a_n\rangle_{\Cst(G_e)} = \sum_{\gamma \in \Gamma}(a_n)_\gamma^*(a_n)_\gamma = \sum_{\gamma \in F}(a_n)_\gamma^*(a_n)_\gamma\to 0
\]
uniformly on the compact subset $\cup_{\gamma \in F} K_\gamma^{-1}K_\gamma$ of $G_e$.
\end{proof}

\begin{prop} \label{prop-L}
Let $X$ be the right-Hilbert $\Cst(G_e)$-bimodule of \cref{cor-right-Hilbert}. Then there exists a homomorphism $L\colon \Cst(G)\to \Ll(X)$ such that $L_a(b) = a * b$ for $a \in C_c(G)\subset \Cst(G)$ and $b \in C_c(G)\subset X$.
\end{prop}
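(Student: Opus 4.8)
The plan is to check directly that $L$ is a $*$\nb-homomorphism into the adjointable operators on the dense submodule $C_c(G)\subset X$, and then to extend it to all of $\Cst(G)$ by invoking the general extension theorem for pre-representations. I would begin by recording a cleaner description of the inner product. Expanding $a^* * b = \sum_{\gamma,\delta} i(a_\gamma)^* * i(b_\delta)$ and noting that $i(a_\gamma)^* * i(b_\delta)$ is supported in $G_{\gamma^{-1}\delta}$, the only terms contributing to $G_e$ are those with $\gamma=\delta$; hence $\langle a\,,\, b\rangle = (a^* * b)|_{G_e}$, the restriction to $G_e$ of the convolution in $C_c(G)$. With this identity, everything reduces to associativity of convolution. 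I would then define $L_a(b)=a*b$ for $a,b\in C_c(G)$; since $C_c(G)$ is a convolution $*$\nb-algebra by \cite[Proposition~1.34]{Dana's book}, we have $L_a(b)\in C_c(G)\subset X$, the map $a\mapsto L_a$ is linear, and $L_{a*a'}=L_aL_{a'}$.

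Next I would verify adjointability on $C_c(G)$. For $a,b,c\in C_c(G)$,
\[
\langle L_a(b)\,,\, c\rangle = \big((a*b)^* * c\big)\big|_{G_e} = \big(b^* * a^* * c\big)\big|_{G_e} = \langle b\,,\, a^* * c\rangle = \langle b\,,\, L_{a^*}(c)\rangle ,
\]
so $L_a$ is adjointable with adjoint $L_{a^*}$, where $a^*$ is the involution in $C_c(G)$. A parallel associativity computation gives $L_a(b\cdot g)=L_a(b)\cdot g$ for $g\in C_c(G_e)$, so each $L_a$ is a module map. Thus $L$ is a $*$\nb-homomorphism from $C_c(G)$ into the adjointable operators on the pre-Hilbert $C_c(G_e)$\nb-module $C_c(G)$; in the language of \cite{BHM} it is a pre-representation.

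The remaining task, which I expect to be the main obstacle, is to extend $L$ to $\Cst(G)$ and to see that the extension takes values in $\Ll(X)$: boundedness of $L_a$ on $X$ is not visible from the defining formula, so rather than estimate $\lv L_a\rv_{\Ll(X)}$ by hand I would verify inductive-limit continuity and appeal to the extension machinery. Concretely, if $a_n\to 0$ in the inductive limit topology then, for fixed $b\in C_c(G)$, also $a_n*b\to 0$ in the inductive limit topology, since the supports remain in the fixed compact set $\supp(a_n)\cdot\supp(b)$ and convolution is continuous for uniform convergence on such sets. Applying \cref{lem-ilt} to the sequence $a_n*b$ gives $\langle a_n*b\,,\, a_n*b\rangle\to 0$ in the inductive limit topology on $C_c(G_e)$, whence $\lv L_{a_n}(b)\rv_X = \lv\langle a_n*b\,,\, a_n*b\rangle\rv^{1/2}\to 0$; this is exactly the inductive-limit continuity of $L$. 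Being an inductive-limit-continuous pre-representation, $L$ extends by \cite[Corollary~6.2]{BHM} to an $I$\nb-norm bounded homomorphism on $C_c(G)$, and an $I$\nb-norm bounded $*$\nb-homomorphism of $C_c(G)$ into a $\Cst$\nb-algebra factors through the universal norm defining $\Cst(G)$, yielding a homomorphism $L\colon\Cst(G)\to\Ll(X)$. By construction $L_a(b)=a*b$ for $a,b\in C_c(G)$, as required.
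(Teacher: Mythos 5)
Your overall strategy is the paper's: exhibit $L\colon C_c(G)\to\Ll(C_c(G))$ as a pre-representation in the sense of \cite[Definition~5.1]{BHM} and extend via \cite[Corollary~6.2]{BHM}, using the fact that an $I$\nb-norm bounded $*$\nb-homomorphism factors through the full norm. Two of the three conditions of \cite[Definition~5.1]{BHM} you handle correctly, and in fact rather tidily: your identity $\langle a\,,\, b\rangle = (a^* * b)|_{G_e}$ (which is right, since $i(a_\gamma)^* * i(b_\delta)$ is supported in $G_{\gamma^{-1}\delta}$, so only the $\gamma=\delta$ terms survive restriction to $G_e$) reduces the adjoint condition $\langle L_{a_1}(b)\,,\, L_{a_2}(d)\rangle = \langle b\,,\, L_{a_1^*a_2}(d)\rangle$ to associativity of convolution, where the paper instead cites the computation on page~431 of \cite{KQR}; and your inductive-limit continuity argument, applying \cref{lem-ilt} to $a_n * b$, implies the paper's first condition (continuity of $a\mapsto \langle b\,,\, L_a(d)\rangle_{\Cst(G_e)}$) via Cauchy--Schwarz, and is essentially the same computation.

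There is, however, a genuine gap: \cite[Definition~5.1]{BHM} contains a third, nondegeneracy condition, namely that $\{L_a(b) : a,b \in C_c(G)\}$ be dense in $X$, and you never verify it --- you declare $L$ a pre-representation after checking only the algebraic conditions. This is not a formality: the automatic $I$\nb-norm boundedness in \cite[Corollary~6.2]{BHM} is proved by a Renault-disintegration-type argument that genuinely uses nondegeneracy, so without this condition the extension machinery you invoke does not apply; boundedness of each $L_a$ on $X$, which you yourself identify as the main obstacle, is exactly what is at stake. This density step is where the paper spends most of its proof and where the standing hypotheses enter: paracompactness of $G^{(0)}$ gives, via \cite[Proposition~1.49]{Dana's book}, an approximate identity $\{e_\mu\}\subset C_c(G_e)$ for $\Cst(G_e)$ with $\lv e_\mu\rv\le 1$, and a finite decomposition $a = \sum_{\gamma\in F} a_\gamma$ together with the estimate $\lv e_\mu^* a_\gamma^* a_\gamma e_\mu - e_\mu^* a_\gamma^* a_\gamma - a_\gamma^* a_\gamma e_\mu + a_\gamma^* a_\gamma\rv < \epsilon^2/\lvert F\rvert$ shows $\lv a\, i(e_\mu) - a\rv_{\Cst(G_e)}\to 0$, i.e.\ $L_a(i(e_\mu))\to a$ in $X$. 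The good news is that the repair is short given what you have already proved: \cite[Proposition~1.49]{Dana's book} in fact provides an approximate identity for $C_c(G)$ in the inductive limit topology, so $e_\mu * b \to b$ in that topology, and your own continuity step (inductive-limit convergence implies $\lv\cdot\rv_X$-convergence via \cref{lem-ilt}) then yields $L_{e_\mu}(b)\to b$ in $X$. One way or another, this paragraph must be supplied before \cite[Corollary~6.2]{BHM} can be cited.
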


\begin{proof}
We will show that $L\colon C_c(G)\to\Ll(C_c(G))$ is a pre-representation of $C_c(G)$ on $X$ as defined in \cite[Definition~5.1]{BHM}. By \cite[Corollary~6.2]{BHM} this gives an $I$-norm bounded representation $L\colon C_c(G)\to \Ll(X)$, which then extends to give a homomorphism $L\colon \Cst(G)\to\Ll(X)$. (We think this is the modern take on the work Kaliszewski, Quigg, and Raeburn do to prove their \cite[Theorem~6.2]{KQR}.)

We check the three items in \cite[Definition~5.1]{BHM}. For the first, we need to fix $b,d \in C_c(G)$ and show that $a\mapsto \langle b\,,\, L_a(d)\rangle_{\Cst(G_e)}$ is continuous in the inductive limit topology. Let $K$ be a compact subset of $G$ and suppose that $a_n,a \in C_c(G)$ have support in $K$ and that $a_n\to a$ uniformly on $K$. Then
\begin{align*}
\lv\langle b\,,\, L_{a_n}(d)\rangle_{\Cst(G_e)}-&\langle b\,,\, L_a(d)\rangle_{\Cst(G_e)}\rv^2
\\
&= \lv\langle b\,,\,(a_n-a)d\rangle_{\Cst(G_e)} \rv^2\\
&= \lv \langle b\,,\,(a_n-a)d\rangle_{\Cst(G_e)}^* \, \langle b\,,\,(a_n-a)d\rangle_{\Cst(G_e)} \rv\\
&\le \lv \langle b\,,\,b\rangle_{\Cst(G_e)} \rv \lv \langle (a_n-a)d\,,\,(a_n-a)d\rangle_{\Cst(G_e)} \rv
\end{align*}
using the Cauchy--Schwarz inequality. Since $(a_n-a)d\to 0$ in the inductive limit topology on $C_c(G)$, \cref{lem-ilt} implies that $\langle (a_n-a)d\,,\,(a_n-a)d\rangle_{\Cst(G_e)}\to 0$ in the inductive limit topology on $C_c(G_e)$. Now $\langle (a_n-a)d\,,\,(a_n-a)d\rangle_{\Cst(G_e)} \to 0$ in $\Cst(G_e)$, and it follows that
\[
\lv\langle b\,,\, L_{a_n}(d)\rangle_{\Cst(G_e)}-\langle b\,,\, L_a(d)\rangle_{\Cst(G_e)}\rv\to 0.
\]
Thus $a\mapsto \langle b\,,\, L_a(d)\rangle_{\Cst(G_e)}$ is continuous in the inductive limit topology.

For the second, we need to show that for $a_1, a_2, b, d \in C_c(G)$ we have
\[
\langle L_{a_1}(b)\,,\, L_{a_2}(d)\rangle_{\Cst(G_e)} = \langle b\,,\, L_{a_1^*a_2}(d)\rangle_{\Cst(G_e)},
\]
which is precisely the calculation on the middle of page~431 in the proof of \cite[Theorem~6.2]{KQR}.

For the third, we need to see that $\{L_a(b) : a,b \in C_c(G)\}$ is dense in $X$. Since $G_e$ has paracompact unit space, there exists an approximate identity $\{ e_\mu\}\subset C_c(G_e)$ for $\Cst(G_e)$ such that each $ e_\mu$ has norm at most $1$ \cite[Proposition~1.49]{Dana's book}. Fix $a \in C_c(G)$ and $\epsilon>0$. Since $C_c(G)$ is dense in $X$, it suffices to show that $\lv a i(e_\mu)-a\rv_{\Cst(G_e)}<\epsilon$ eventually. There exists a finite subset $F$ of $\Gamma$ such that $a_\gamma\coloneqq a|_{G_\gamma}\ne 0$ implies $\gamma \in F$. Since $F$ is finite, there exists $\mu_0$ such that
\[
\mu \ge \mu_0 \implies \lv a_\gamma^*a_\gamma e_\mu-a_\gamma^*a_\gamma\rv<\frac{\epsilon^2}{2\lvert F \rvert} \ \text{ for all } \gamma \in F.
\]
Now, for $\mu \ge \mu_0$,
\begin{align*}
\lv ai( e_\mu) -a\rv_{\Cst(G_e)}^2
&= \Big\lv\sum_{\gamma \in \Gamma} (ai( e_\mu)-a)_\gamma^* \, (ai( e_\mu)-a)_\gamma \Big\rv\\
&= \Big\lv\sum_{\gamma \in F} e_\mu^* a_\gamma^*a_\gamma e_\mu- e_\mu^* a_\gamma^*a_\gamma - a_\gamma^*a_\gamma e_\mu +a_\gamma^*a_\gamma \Big\rv\\
&\le\sum_{\gamma \in F} \Big(\lv e_\mu^*\rv \lv a_\gamma^*a_\gamma e_\mu -a_\gamma^*a_\gamma\rv + \lv a_\gamma^*a_\gamma-a_\gamma^*a_\gamma e_\mu \rv\Big)\\
&<\sum_{\gamma \in F}\frac{\epsilon^2}{\lvert F \rvert} = \epsilon^2.
\end{align*}
Thus $L\colon C_c(G)\to\Ll(C_c(G))$ is a pre-representation of $C_c(G)$ on $X$, as needed, and so it extends to a homomorphism $L\colon \Cst(G)\to\Ll(X)$.
\end{proof}

We can now prove the first part of \cref{thm-inclusions}. We set
\[
B_e\coloneqq\overline{\{a \in C_c(G) : \supp (a)\subset G_e\}}\subset \Cst(G).
\]

\begin{prop} \label{prop-part1}
Let $L\colon \Cst(G)\to\Ll(X)$ be the homomorphism of \cref{prop-L}. Then $L|_{B_e} \colon B_e\to \Ll(X)$ is injective and inclusion $i\colon C_c(G_e)\to C_c(G)$ extends to an isometric isomorphism $i_\ff\colon \Cst(G_e)\to B_e$.
\end{prop}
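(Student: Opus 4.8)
The plan is to prove the single chain of norm equalities
\[
\lv f\rv_{\Cst(G_e)} = \lv i(f)\rv_{\Cst(G)} = \lv L_{i(f)}\rv_{\Ll(X)}\qquad\text{for all }f\in C_c(G_e),
\]
from which both assertions follow by density and continuity. I would obtain these from three inequalities that close into a cycle: two are routine, and the third is where the real work lies.

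Two preliminary facts make the easy inequalities immediate. First, since $G_e$ is open and closed in $G$ and we chose the Haar system on $G_e$ to be the restriction of $\lambda$, the map $i$ is a $*$\nb-homomorphism of $C_c(G_e)$ into $C_c(G)$ that preserves the $I$\nb-norm, $\lv i(f)\rv_I = \lv f\rv_I$, because $i(f)$ is supported on $G_e$ and integration against $\lambda^u$ restricts to integration against $\lambda^u|_{G_e}$. Hence if $\pi$ is any $I$\nb-norm bounded representation of $C_c(G)$, then $\pi\circ i$ is an $I$\nb-norm bounded representation of $C_c(G_e)$, so $\lv\pi(i(f))\rv\le\lv f\rv_{\Cst(G_e)}$; taking the supremum over such $\pi$ gives $\lv i(f)\rv_{\Cst(G)}\le\lv f\rv_{\Cst(G_e)}$. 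Second, since $L$ is a homomorphism of $\Cst$\nb-algebras it is contractive, so $\lv L_{i(f)}\rv_{\Ll(X)}\le\lv i(f)\rv_{\Cst(G)}$.

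The substantive step is the reverse inequality $\lv f\rv_{\Cst(G_e)}\le\lv L_{i(f)}\rv_{\Ll(X)}$, and here I would exploit the approximate identity $\{e_\mu\}\subset C_c(G_e)$ with $\lv e_\mu\rv\le 1$ supplied by \cite[Proposition~1.49]{Dana's book}. A short computation from \eqref{eq-innerproduct} shows that the inner product of two vectors coming from $G_e$ collapses to a single term: for $g,h\in C_c(G_e)$,
\[
\langle i(g)\,,\, i(h)\rangle_{\Cst(G_e)} = g^* h,
\]
the product taken in $C_c(G_e)\subset\Cst(G_e)$. In particular the module norm satisfies $\lv i(e_\mu)\rv_{\Cst(G_e)}^2 = \lv e_\mu^* e_\mu\rv\le 1$, and since $L_{i(f)}(i(e_\mu)) = i(f)*i(e_\mu) = i(fe_\mu)$, the same formula together with the $\Cst$\nb-identity in $\Cst(G_e)$ gives
\[
\lv L_{i(f)}(i(e_\mu))\rv_{\Cst(G_e)} = \lv f e_\mu\rv_{\Cst(G_e)} \to \lv f\rv_{\Cst(G_e)}\quad\text{as }\mu\to\infty.
\]
Combining this with $\lv L_{i(f)}(i(e_\mu))\rv_{\Cst(G_e)}\le\lv L_{i(f)}\rv_{\Ll(X)}\,\lv i(e_\mu)\rv_{\Cst(G_e)}\le\lv L_{i(f)}\rv_{\Ll(X)}$ and letting $\mu\to\infty$ yields the desired bound, which closes the cycle and forces all three norms to agree.

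Both conclusions are then immediate. Because $i$ is an isometric $*$\nb-homomorphism for the norms $\lv\cdot\rv_{\Cst(G_e)}$ and $\lv\cdot\rv_{\Cst(G)}$, it extends by continuity to an isometric $*$\nb-homomorphism $i_\ff\colon\Cst(G_e)\to\Cst(G)$; as $B_e$ is by definition the closure of $i(C_c(G_e))$ and an isometry has closed range, $i_\ff$ carries $\Cst(G_e)$ onto $B_e$, giving the isometric isomorphism. Because $\lv L_{i(f)}\rv_{\Ll(X)} = \lv i(f)\rv_{\Cst(G)}$, the homomorphism $L$ is isometric on the dense subset $i(C_c(G_e))$ of $B_e$, hence isometric, and so injective, on all of $B_e$. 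The main obstacle is exactly the reverse inequality, and it is there that the standing hypotheses enter: one needs the norm-bounded approximate identity in $C_c(G_e)$---hence the second-countability (paracompactness) assumption---to produce vectors $i(e_\mu)$ on which $L_{i(f)}$ nearly attains the norm $\lv f\rv_{\Cst(G_e)}$.
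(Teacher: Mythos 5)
Your proof is correct, and its skeleton is exactly the paper's: both arguments close the cycle $\lv f\rv \le \lv L_{i(f)}\rv \le \lv i(f)\rv \le \lv f\rv$, with the two easy inequalities obtained just as you obtain them ($I$\nb-norm isometry of $i$ for $\lv i(f)\rv \le \lv f\rv$, contractivity of the $\Cst$\nb-homomorphism $L$ of \cref{prop-L} for the middle bound), and both rest on the same computation $\lv i(g)\rv_{\Cst(G_e)}^2 = \lv g^*g\rv = \lv g\rv^2$ for $g \in C_c(G_e)$. The one genuine divergence is in the substantive lower bound $\lv f\rv \le \lv L_{i(f)}\rv$: where you test $L_{i(f)}$ against the approximate-identity vectors $i(e_\mu)$ and pass to a limit via $\lv fe_\mu\rv \to \lv f\rv$, the paper tests against the single unit vector $i(f)^*/\lv f\rv = i(f^*)/\lv f\rv$ and gets the bound exactly, with no limit, from the $\Cst$\nb-identity:
\[
\lv L_{i(f)}\rv \ge \lv i(f)\,i(f)^*\rv_{\Cst(G_e)}/\lv f\rv = \lv i(ff^*)\rv_{\Cst(G_e)}/\lv f\rv = \lv ff^*\rv/\lv f\rv = \lv f\rv.
\]
Both routes are valid; yours costs a limiting argument but uses only machinery already in play, since the bounded approximate identity of \cite[Proposition~1.49]{Dana's book} was needed anyway for the density condition in the proof of \cref{prop-L}. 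One small correction to your closing remark: the approximate identity (and hence paracompactness of the unit space) is \emph{not} in fact needed for this particular step --- the paper's single test vector shows that $L_{i(f)}$ attains, not merely nearly attains, the norm $\lv f\rv$ on the unit sphere of $X$, so the standing hypotheses enter this proposition only through the construction of $L$ in \cref{prop-L}, not through the isometry argument itself.
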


\begin{proof}
Since $i$ is a homomorphism that is isometric with respect to the $I$-norms, it extends to a norm-decreasing homomorphism $i_\ff\colon \Cst(G_e)\to \Cst(G)$. Fix $f \in C_c(G_e)$. Then we have $\lv i(f)\rv\le \lv f\rv$ for $f \in C_c(G_e)$. Thus we need to show that $\lv i(f)\rv\ge \lv f\rv$ for $f \in C_c(G_e)$. Notice that for $g \in C_c(G_e)$, we have
\[
\lv i(g)\rv_{\Cst(G_e)}^2 = \Big\lv \sum_{\gamma \in \Gamma} i(g)_\gamma^* \, i(g)_\gamma \Big\rv = \big\lv i(g)_e^* \, i(g)_e \big\rv = \lv g^*g \rv = \lv g \rv^2.
\]
Now fix $f \in C_c(G_e) \setminus \{0\}$. Then $i(f)^*/\lv f \rv$ has norm $1$ in $X$, and
\begin{align*}
\lv L_{i(f)}\rv
&= \sup\{\lv i(f)b\rv_{\Cst(G_e)} : b \in X, \, \lv b\rv_{\Cst(G_e)}\le 1\}\\
&\ge \lv i(f)i(f)^*\rv_{\Cst(G_e)}/\lv f \rv = \lv i(ff^*)\rv_{\Cst(G_e)}/\lv f \rv = \lv ff^*\rv / \lv f \rv = \lv f \rv.
\end{align*}
By \cref{prop-L}, $L$ is norm-decreasing, and so $\lv f \rv\le \lv L_{i(f)} \rv \le \lv i(f) \rv \le \lv f \rv$, giving equalities throughout. Thus $i$ extends to an isometric isomorphism as claimed, and $L$ is isometric on $B_e$.
\end{proof}

Fix $u \in G^{(0)}$. Set $Gu\coloneqq\{x \in G : s(x) = u\}$ and let $\lambda_u$ be the push forward of $\lambda^u$ under inversion, so that
\[
\int_G a(x)\, \dd\lambda_u(x) = \int_G a(x^{-1})\, \dd\lambda^u(x)
\]
for $a \in C_c(G)$. Then the regular representation $\pi_u\colon \Cst(G) \to B\big(L^2(Gu), \lambda_u\big)$ is given for $a \in C_c(G)$ and $h \in C_c(Gu)$ by $\pi_u(a)h = a * h$; that is, by the convolution formula \cite[page~18]{Dana's book}. Let $\pi_u^e$ denote the regular representation of $\Cst(G_e)$ on $L^2((G_e)u, \lambda_u)$.
Let $\gamma \in \Gamma$. Since $G_\gamma$ is open and closed, we can view $L^2((G_\gamma)u, \lambda_u)$ as a closed subspace of $L^2(Gu, \lambda_u)$.

We can now prove the second part of \cref{thm-inclusions}. We set
\[
A_e\coloneqq\overline{\{a \in C_c(G) : \supp (a)\subset G_e\}}\subset \Cst_\rr(G).
\]

\begin{prop}\label{prop-part2}
We have that $\ker q^{G_e} = \ker (q^G\circ i_\ff)$, and consequently the inclusion $i\colon C_c(G_e) \to C_c(G)$ extends to an isomorphism $i_\rr\colon \Cst_\rr(G_e) \to A_e$.
\end{prop}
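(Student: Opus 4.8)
The plan is to show that $q^G\circ i_\ff$ and $q^{G_e}$ induce the same seminorm on $\Cst(G_e)$, namely $\lv q^G(i_\ff(z))\rv_\rr = \lv q^{G_e}(z)\rv_\rr$ for all $z$; the equality of kernels is then immediate, and the existence, injectivity, and range of $i_\rr$ follow formally. Since $\lv q^G(a)\rv_\rr = \sup_{u}\lv\pi_u(a)\rv$ and likewise for $G_e$, I would work one regular representation at a time. Fix $u\in G^{(0)}$ and decompose $Gu = \bigsqcup_{\gamma\in\Gamma}(G_\gamma)u$. Because $i_\ff(f)=i(f)$ is supported in $G_e$ and $c$ is a homomorphism, left convolution by $i(f)$ carries a function supported on $(G_\gamma)u$ to another such function, so each closed subspace $L^2((G_\gamma)u,\lambda_u)$ is invariant and $\pi_u(i_\ff(f))$ is the direct sum over $\gamma\in\Gamma$ of its restrictions $\pi_u(i_\ff(f))|_{L^2((G_\gamma)u,\lambda_u)}$. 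Inspecting the convolution formula, the $\gamma=e$ summand is exactly $\pi_u^e(f)$ on $L^2((G_e)u,\lambda_u)$. Hence $\lv\pi_u(i_\ff(f))\rv\ge\lv\pi_u^e(f)\rv$ for every $u$, and taking suprema gives $\lv q^G(i_\ff(z))\rv_\rr\ge\lv q^{G_e}(z)\rv_\rr$; in particular $\ker(q^G\circ i_\ff)\subset\ker q^{G_e}$.

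The reverse bound is the heart of the argument, and its proof is the main obstacle: I must identify each nonzero summand $\pi_u(i_\ff(f))|_{L^2((G_\gamma)u,\lambda_u)}$ with a regular representation of $G_e$. Choosing any $x_0\in(G_\gamma)u$ and setting $v\coloneqq r(x_0)$, the map $\Phi\colon(G_e)v\to(G_\gamma)u$, $\Phi(n)=nx_0$, is a bijection with inverse $x\mapsto xx_0^{-1}$. The delicate point is that $\Phi$ is a right translation, whereas $\lambda$ is a left Haar system; using left-invariance of $\lambda$ applied to $x_0$ one checks that $\Phi$ pushes $\lambda_v$ forward onto $\lambda_u|_{(G_\gamma)u}$, so that $(U\xi)(n)\coloneqq\xi(nx_0)$ defines a unitary $U\colon L^2((G_\gamma)u,\lambda_u)\to L^2((G_e)v,\lambda_v)$. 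Comparing the two convolution formulae then shows $U$ intertwines $\pi_u(i_\ff(f))|_{L^2((G_\gamma)u,\lambda_u)}$ with $\pi_v^e(f)$, whence $\lv\pi_u(i_\ff(f))|_{L^2((G_\gamma)u,\lambda_u)}\rv=\lv\pi_v^e(f)\rv\le\lv q^{G_e}(f)\rv_\rr$. Taking the supremum over $\gamma$ and then over $u$ yields $\lv q^G(i_\ff(f))\rv_\rr\le\lv q^{G_e}(f)\rv_\rr$.

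Combining the two inequalities gives $\lv q^G(i_\ff(f))\rv_\rr=\lv q^{G_e}(f)\rv_\rr$ for $f\in C_c(G_e)$, and by continuity and density this persists on all of $\Cst(G_e)$; in particular $\ker q^{G_e}=\ker(q^G\circ i_\ff)$. Because $q^G\circ i_\ff$ then vanishes on $\ker q^{G_e}$, it factors as $q^G\circ i_\ff=i_\rr\circ q^{G_e}$ for a homomorphism $i_\rr\colon\Cst_\rr(G_e)\to\Cst_\rr(G)$, and the equality of kernels forces $i_\rr$ to be injective, hence isometric. Finally $i_\rr(\Cst_\rr(G_e))=q^G(i_\ff(\Cst(G_e)))=q^G(B_e)=A_e$, since $q^G$ is the identity on $C_c(G)$ and therefore maps the $\Cst$\nb-subalgebra $B_e$ onto the $\Cst$\nb-subalgebra $A_e$, both being closures of $\{a\in C_c(G):\supp(a)\subset G_e\}$. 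Thus $i_\rr$ is an isomorphism onto $A_e$ extending $i$, as required.
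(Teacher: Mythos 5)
Your proposal is correct and follows essentially the same route as the paper's proof (itself adapted from \cite[Lemma~8.1]{BEM}): the same decomposition $Gu = \bigsqcup_{\gamma\in\Gamma}(G_\gamma)u$ diagonalising $\pi_u(i_\ff(f))$, and the same right-translation unitaries (your $U$ built from $\Phi(n)=nx_0$ is exactly the paper's $V_\gamma$ with $x_0 = z_{u,\gamma}$) identifying each block with a regular representation $\pi^e_{r(x_0)}$ of $G_e$. The only differences are cosmetic: you split the norm identity into two inequalities where the paper gets one chain of equalities via the convention $z_{u,e}=u$, and you make explicit the measure-pushforward check (left invariance of $\lambda$ versus right translation) that the paper leaves as ``induces an isomorphism.''
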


\begin{proof}
The idea of this proof comes from \cite[Lemma~8.1]{BEM}. Fix $u \in G^{(0)}$. Then $Gu = \bigsqcup_{\gamma\in\Gamma} (G_\gamma)u$, and it follows that the map $U\colon L^2(Gu,\lambda_u) \to \bigoplus_{\gamma \in \Gamma} L^2((G_\gamma)u,\lambda_u)$ defined by $U(h) = (h|_{(G_\gamma)u})_{\gamma \in \Gamma}$ is an isomorphism. A calculation shows that for $a \in C_c(G_e)$ we have
\[
U\pi_u(i_\ff(a))U^* = \bigoplus_{\gamma\in\Gamma}\pi_u^\gamma(a),
\]
where $\pi_u^\gamma\colon C_c(G_e)\to B\big(L^2\big((G_\gamma)u,\lambda_u\big)\big)$ is given by
\[
(\pi_u^\gamma(a)h)(x) = \int_G i(a)(y) \, h(y^{-1}x)\, d\lambda^{r(x)}(y)
\]
for $a \in C_c(G_e)$ and $h \in L^2\big((G_\gamma)u,\lambda_u\big)$.

Define $z_{u,e} \coloneqq u$, and for each $\gamma \in \Gamma {\setminus} \{e\}$ with $(G_\gamma)u \ne \varnothing$, choose $z_{u,\gamma} \in (G_\gamma)u$. Then for each $\gamma \in \Gamma$, $x\mapsto x z_{u,\gamma}$ is a homeomorphism of $(G_e)r(z_{u,\gamma})$ onto $(G_\gamma)u$ which induces an isomorphism
\[
V_\gamma\colon L^2\big((G_\gamma)u,\lambda_u\big)\to L^2\big((G_e)r(z_{u,\gamma}),\lambda_{r(z_{u,\gamma})}\big).
\]
Another calculation shows that $V_\gamma \, \pi_u^\gamma(a) \, V_\gamma^* = \pi_{r(z_{u,\gamma})}^e(a)$, and therefore,
\begin{align*}
\lv i_\ff(a) \rv_r &= \sup\!\big\{ \lv \pi_v(i_\ff(a)) \rv : v \in G^{(0)} \big\} \\
&= \sup\!\big\{ \sup\{ \lv \pi_v^\gamma(a) \rv : \gamma \in \Gamma \} : v \in G^{(0)} \big\} \\
&= \sup\!\big\{ \sup\!\big\{ \big\lv \pi_{r(z_{v,\gamma})}^e(a) \big\rv : \gamma \in \Gamma \big\} : v \in G^{(0)} \big\} \\
&= \sup\!\big\{ \lv \pi_v^e(a) \rv : v \in G^{(0)} \big\} \\
&= \lv a \rv_r.
\end{align*}
It follows that $i$ extends to an injective homomorphism of $\Cst_\rr(G_e)$ onto $A_e$, and that $\ker(q^G\circ i_\ff) = \ker q^{G_e}$.
\end{proof}

This completes the proof of \cref{thm-inclusions}.

\section{Application to topologically graded \texorpdfstring{$\Cst$}{C*}-algebras}

We now show that in the situations of \cref{thm-inclusions}, the groupoid $\Cst$\nb-algebras $\Cst(G)$ and $\Cst_\rr(G)$ are topologically graded. We start by recalling the definitions of \emph{grading} and \emph{topological grading}.

Let $A$ be a $\Cst$\nb-algebra. Following \cite[Definition~16.2]{Exel's book}, we say that \emph{$A$ is graded over $\Gamma$} if there exists a collection $\{A_\gamma : \gamma \in \Gamma\}$ of closed subspaces of $A$ such that
\begin{enumerate}[label=(\roman*), ref=\roman*]
\item \label{grading1} for all $\beta,\gamma \in \Gamma$ we have $A_\beta A_\gamma\subset A_{\beta\gamma}$ and $A_\gamma^* = A_{\gamma^{-1}}$;
\item \label{grading2} $\lsp\{A_\gamma : \gamma \in \Gamma\}$ is dense in $A$; and
\item \label{grading3} the subspaces $\{A_\gamma : \gamma \in \Gamma\}$ are linearly independent.
\end{enumerate}

Each $A_\gamma$ is called a \emph{grading subspace}. In \cite[Definition~19.2]{Exel's book}, $A$ is said to be \emph{topologically graded over $\Gamma$} if there exists a collection $\{A_\gamma : \gamma \in \gamma\}$ of closed subspaces of $A$ satisfying \cref{grading1} and \cref{grading2}, and there exists a bounded linear map $P\colon A\to A$ such that $P = \id$ on $A_e$ and $P = 0$ on $A_\gamma$ when $\gamma\ne e$. If $A$ is topologically graded, then \cref{grading3} holds and $P$ is a conditional expectation onto $A_e$ by \cite[Theorem~19.1]{Exel's book}.

We obtain the following corollary of \cref{thm-inclusions}.

\begin{cor} \label{cor-app}
Let $G$ be a second-countable locally compact Hausdorff groupoid with a Haar system. Let $\Gamma$ be a discrete group and let $c\colon G\to\Gamma$ be a continuous homomorphism. For $\gamma\in\Gamma$, set $G_\gamma\coloneqq c^{-1}(\{\gamma\})$,
\[
A_\gamma\coloneqq\overline{\{f \in C_c(G) : \supp f\subset G_\gamma\}}\subset \Cst_\rr(G),
\]
and
\[
B_\gamma\coloneqq \overline{\{f \in C_c(G) :\supp f\subset G_\gamma\}}\subset \Cst(G).
\]
The $\Cst$\nb-algebras $\Cst_\rr(G)$ and $\Cst(G)$ are topologically graded with grading subspaces $\Aa\coloneqq\{A_\gamma : \gamma\in\Gamma\}$ and $\Bb\coloneqq\{B_\gamma : \gamma\in\Gamma\}$, respectively. In both cases, the conditional expectation is induced by restriction of functions.
\end{cor}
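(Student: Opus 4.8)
The plan is to verify the algebraic axioms \cref{grading1,grading2} for each of $\Aa$ and $\Bb$, and then to produce, for each of $\Cst_\rr(G)$ and $\Cst(G)$, the bounded linear map $P$ required by the definition of a topological grading. Once $P$ is in hand, \cite[Theorem~19.1]{Exel's book} automatically supplies \cref{grading3} and shows that $P$ is the conditional expectation onto the degree\nobreakdash-$e$ subalgebra, so no further properties of $P$ need be checked. On $C_c(G)$ the map will be restriction, $P(a)=a_e=a|_{G_e}$, which is why the expectation is ``induced by restriction of functions''; the only real content is its boundedness.

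The axioms \cref{grading1,grading2} hold verbatim in both algebras, so I verify them for $\Bb\subseteq\Cst(G)$. Since $c$ is a homomorphism, convolution carries functions supported in $G_\beta$ and $G_\gamma$ to a function supported in $G_{\beta\gamma}$, and involution carries a function supported in $G_\gamma$ to one supported in $G_{\gamma^{-1}}$; passing to closures gives $B_\beta B_\gamma\subseteq B_{\beta\gamma}$ and $B_\gamma^*=B_{\gamma^{-1}}$. For \cref{grading2}, each $a\in C_c(G)$ is the finite sum $\sum_\gamma a_\gamma$ of its homogeneous restrictions (as recorded in the Set-up), so $C_c(G)\subseteq\lsp\{B_\gamma:\gamma\in\Gamma\}$, and density of $C_c(G)$ in $\Cst(G)$ gives density of the span.

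For $\Cst_\rr(G)$ I would obtain $P$ by compressing the regular representations of \cref{prop-part2}. Fix $u\in G^{(0)}$; the decomposition $Gu=\bigsqcup_\gamma (G_\gamma)u$ writes $L^2(Gu,\lambda_u)$ as the orthogonal sum $\bigoplus_\gamma L^2((G_\gamma)u,\lambda_u)$, and I let $P_e$ be the projection onto the summand $L^2((G_e)u,\lambda_u)$. A short computation with the convolution formula shows $P_e\,\pi_u(a)\,P_e=\pi_u^e(a_e)$: for $x\in(G_e)u$ and $h$ supported on $(G_e)u$, the integrand $a(y)\,h(y^{-1}x)$ vanishes unless $y\in G_e$, so only $a_e$ survives. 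Hence $\lv\pi_u^e(a_e)\rv\le\lv\pi_u(a)\rv$, and taking the supremum over $u$ (the unit space of $G_e$ is all of $G^{(0)}$) gives $\lv a_e\rv_{\Cst_\rr(G_e)}\le\lv a\rv_\rr$. Thus restriction extends to a contraction $\Cst_\rr(G)\to\Cst_\rr(G_e)$, and post\nobreakdash-composing with the isometric $i_\rr$ of \cref{thm-inclusions} produces $P_\rr\colon\Cst_\rr(G)\to A_e\subseteq\Cst_\rr(G)$.

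For $\Cst(G)$ the compression argument is unavailable, since the full norm is a supremum over all representations rather than over a family I can cut down by a projection, and securing boundedness here is the main obstacle. I would instead use the coaction of $\Gamma$ induced by $c$: because $\Gamma$ is discrete, the integrated form $a\mapsto[x\mapsto a(x)u_{c(x)}]$ is $I$-norm-decreasing, so by the universal property of $\Cst(G)$ it extends to a homomorphism (a coaction) $\delta\colon\Cst(G)\to M(\Cst(G)\otimes\Cst(\Gamma))$ with $\delta(a)=\sum_\gamma a_\gamma\otimes u_\gamma$ on $C_c(G)$. Slicing by the canonical tracial state $\tau$ of $\Cst(\Gamma)$ (so $\tau(u_\gamma)=1$ if $\gamma=e$ and $0$ otherwise) gives the bounded map $P_\ff\coloneqq(\id\otimes\tau)\circ\delta\colon\Cst(G)\to\Cst(G)$, which on $C_c(G)$ is $a\mapsto\sum_\gamma\tau(u_\gamma)a_\gamma=a_e$. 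In either algebra, $P$ is the identity on the degree\nobreakdash-$e$ subspace and vanishes on $B_\gamma$ (resp.\ $A_\gamma$) for $\gamma\neq e$, since this already holds on the dense sets of homogeneous functions; \cite[Theorem~19.1]{Exel's book} then completes the proof.
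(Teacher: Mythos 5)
Your verification of \cref{grading1,grading2} and your entire treatment of the reduced case coincide with the paper's: the compression identity $W\pi_u(a)W=\pi_u^e(a|_{G_e})$, where $W$ is the projection of $L^2(Gu,\lambda_u)$ onto $L^2((G_e)u,\lambda_u)$, is exactly how \cref{prop-QP} obtains the contraction $Q_\rr$, and composing with the isometric $i_\rr$ of \cref{thm-inclusions} and finishing via \cite[Theorem~19.1]{Exel's book} is also the paper's route. Where you genuinely depart from the paper is the full algebra, and that is where there is a gap.

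You assert that $\delta_0\colon a\mapsto\sum_\gamma a_\gamma\otimes u_\gamma$ is $I$-norm-decreasing and hence extends to $\Cst(G)$ by the universal property, but for non-\'etale $G$ this is precisely the hard point, and no justification is offered. The obvious estimate runs the wrong way: the triangle inequality gives $\lv\delta_0(a)\rv\le\sum_\gamma\lv a_\gamma\otimes u_\gamma\rv\le\sum_\gamma\lv a_\gamma\rv_I$, and since the sets $G_\gamma$ are pairwise disjoint one has $\sum_\gamma\lv a_\gamma\rv_I\ge\lv a\rv_I$ (the homogeneous components may attain their $I$-norms at different units), so this bounds $\lv\delta_0(a)\rv$ by a quantity at least $\lv a\rv_I$, not at most. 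The claim is true, but repairing it requires genuine machinery: either view $\delta_0(a)$ as the element $x\mapsto a(x)u_{c(x)}$ of the convolution algebra $C_c(G,\Cst(\Gamma))$, whose fibrewise norm is $\lvert a(x)\rvert$ so that its $I$-norm there equals $\lv a\rv_I$ exactly, and then invoke the theory of groupoid crossed products by the trivial action to land in $\Cst(G)\otimes_{\max}\Cst(\Gamma)$; or compose $\delta_0$ with a faithful Hilbert-space representation of $\Cst(G)\otimes\Cst(\Gamma)$ (the universal property of $\Cst(G)$ concerns Hilbert-space representations, so this composition is needed in any case), check continuity in the inductive limit topology --- which is easy, by the same finitely-many-components argument as in \cref{lem-ilt} --- and invoke Renault's disintegration theorem (see \cite{Dana's book}) to conclude boundedness. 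Once boundedness of $\delta_0$ is in hand, the rest of your argument is fine: $(\id\otimes\tau)$ is contractive and agrees with $a\mapsto a_e$ on the algebraic tensor product, so $\lv a_e\rv\le\lv a\rv$ on $C_c(G)$.

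By contrast, the paper's \cref{prop-QP} gets the bound $\lv a|_{G_e}\rv\le\lv a\rv$ elementarily from tools already built for \cref{thm-inclusions}: first $\lv a|_{G_e}\rv\le\lv a\rv_{\Cst(G_e)}$, since $\sum_\gamma a_\gamma^*a_\gamma\ge a_e^*a_e$, and then $\lv a\rv_{\Cst(G_e)}\le\lv L_a\rv\le\lv a\rv$, obtained by testing the homomorphism $L$ of \cref{prop-L} against the approximate identity $\{i(e_\mu)\}$ of \cite[Proposition~1.49]{Dana's book}, whose module norms are at most $1$. Your coaction route is attractive and very much in the spirit of \cite{KQR}, but as written the crucial boundedness step is asserted rather than proved, and it is exactly the step where the non-\'etale difficulty lives.
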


We start by showing that restriction of functions gives linear contractions from $C_c(G)$ to $C_c(G_e)$ which are suitably bounded.

\begin{prop} \label{prop-QP}
There are linear contractions
\[
Q_\ff\colon \Cst(G) \to \Cst(G_e) \ \text{ and } \ Q_\rr\colon \Cst_\rr(G) \to \Cst_\rr(G_e)
\]
such that $Q_\ff(a) = a|_{G_e} = Q_\rr(a)$ for $a \in C_c(G)$, and $q^{G_e}\circ Q_\ff = Q_\rr\circ q^G$.
\end{prop}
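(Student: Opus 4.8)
The plan is to define both maps on the dense subalgebra $C_c(G)$ by restriction, setting $Q_\ff(a)\coloneqq a|_{G_e}=a_e$ and $Q_\rr(a)\coloneqq a_e$, and then to establish the two norm estimates $\lv a_e\rv\le\lv a\rv$ in $\Cst(G_e)$ and $\lv a_e\rv_\rr\le\lv a\rv_\rr$ in $\Cst_\rr(G_e)$ for $a\in C_c(G)$. Once these hold on $C_c(G)$, each map extends uniquely to a linear contraction on the relevant completion, and the compatibility $q^{G_e}\circ Q_\ff=Q_\rr\circ q^G$ is then automatic: both composites are continuous and agree on the dense subalgebra $C_c(G)$, since each sends $a$ to the class of $a_e$ in $\Cst_\rr(G_e)$.

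For the full estimate I would work inside the right-Hilbert $\Cst(G_e)$-module $X$ and use the homomorphism $L$ of \cref{prop-L}, together with the approximate identity $\{e_\mu\}\subset C_c(G_e)$ for $\Cst(G_e)$ with $\lv e_\mu\rv\le1$ used there. Since $i(e_\mu)$ is supported in $G_e$ and $(a*i(e_\mu))|_{G_e}=a_e*e_\mu$, a direct computation with the inner product \labelcref{eq-innerproduct} gives
\[
\langle i(e_\mu)\,,\,L_a(i(e_\mu))\rangle_{\Cst(G_e)}=e_\mu^*\,a_e\,e_\mu.
\]
As $\{e_\mu\}$ (and hence $\{e_\mu^*\}$) is an approximate identity, the right-hand side converges to $a_e$ in $\Cst(G_e)$. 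On the other hand $\lv i(e_\mu)\rv_{\Cst(G_e)}=\lv e_\mu\rv\le1$ and $L$ is norm-decreasing, so the Cauchy--Schwarz inequality bounds each term by $\lv i(e_\mu)\rv_{\Cst(G_e)}^2\,\lv L_a\rv\le\lv a\rv$. Letting $\mu\to\infty$ yields $\lv a_e\rv\le\lv a\rv$, so $Q_\ff$ extends to a contraction $\Cst(G)\to\Cst(G_e)$.

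For the reduced estimate I would argue directly with the regular representations, reusing the decomposition from \cref{prop-part2}. Fix $u\in G^{(0)}$ and write $L^2(Gu,\lambda_u)=\bigoplus_{\gamma\in\Gamma}L^2((G_\gamma)u,\lambda_u)$ with orthogonal projections $P_\gamma$ onto the summands. Whenever $\supp(a_\delta)\subset G_\delta$ and $h$ is supported in $(G_\gamma)u$, the function $a_\delta*h$ is supported in $(G_{\delta\gamma})u$, so $\pi_u(a_\delta)$ carries the $\gamma$-summand into the $\delta\gamma$-summand; hence the block-diagonal compression $\sum_{\gamma}P_\gamma\pi_u(a)P_\gamma$ retains exactly the $\delta=e$ contribution and equals $\pi_u(a_e)$. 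Because the summands are orthogonal, $\lv\sum_{\gamma}P_\gamma\pi_u(a)P_\gamma\rv=\sup_{\gamma}\lv P_\gamma\pi_u(a)P_\gamma\rv\le\lv\pi_u(a)\rv$, so $\lv\pi_u(a_e)\rv\le\lv\pi_u(a)\rv$. Taking the supremum over $u$ gives $\sup_u\lv\pi_u(a_e)\rv\le\lv a\rv_\rr$; by \cref{prop-part2} the left-hand side equals $\lv a_e\rv_\rr$ in $\Cst_\rr(G_e)$, so $\lv a_e\rv_\rr\le\lv a\rv_\rr$ and $Q_\rr$ extends to a contraction.

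I expect the main obstacle to be the full-norm estimate. For the reduced algebra one has concrete regular representations to compress, but for an arbitrary representation of $\Cst(G)$ there is no such compression, which is precisely why the restriction map must be routed through the adjointable-operator picture $L$ and the approximate identity. The routine points needing care there are the convolution identity $(a*i(e_\mu))|_{G_e}=a_e*e_\mu$ and the fact that $\{e_\mu^*\}$ is again an approximate identity, so that $e_\mu^*\,a_e\,e_\mu\to a_e$ genuinely holds and the bound passes to the limit.
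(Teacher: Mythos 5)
Your proposal is correct and follows essentially the same route as the paper: the full-norm estimate comes from the homomorphism $L$ of \cref{prop-L} together with the norm-one approximate identity $\{e_\mu\}\subset C_c(G_e)$ (you compute $\langle i(e_\mu)\,,\,L_a(i(e_\mu))\rangle_{\Cst(G_e)}=e_\mu^*a_ee_\mu\to a_e$ directly, where the paper instead establishes the chain $\lv a_e\rv\le\lv a\rv_{\Cst(G_e)}\le\lv L_a\rv$), and the reduced estimate comes from compressing the regular representations $\pi_u$. The only cosmetic difference in the reduced case is that you compress onto all summands $L^2((G_\gamma)u,\lambda_u)$ and then invoke the norm identity of \cref{prop-part2}, whereas the paper compresses with the single projection $W$ onto $L^2((G_e)u,\lambda_u)$ and uses $\pi_u^e(a|_{G_e})=W\pi_u(a)W$, which gives $\lv a_e\rv_\rr\le\lv a\rv_\rr$ without any appeal to \cref{prop-part2}.
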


\begin{proof}
Fix $a \in C_c(G)$. Since $G_e$ is closed, $a|_{G_e}$ is continuous and has compact support. Let $L\colon \Cst(G)\to\Ll(X)$ be the homomorphism of \cref{prop-L}. We will show that
\begin{equation*}
\lv a|_{G_e} \rv \le \lv a \rv_{\Cst(G_e)}\le \lv L_a \rv;
\end{equation*}
then $\lv a|_{G_e} \rv \le \lv a \rv$, and it follows that restriction extends to a bounded linear map $Q_\ff\colon \Cst(G) \to \Cst(G_e)$ of norm at most $1$.

Since $\sum_{\gamma\in\Gamma} a_\gamma^*a_\gamma\ge a_e^*a_e\ge 0$, we have
\[
\lv a \rv_{\Cst(G_e)}^2 = \Big\lv \sum_{\gamma\in\Gamma} a_\gamma^*a_\gamma \Big\rv\ge \lv a_e^*a_e\rv = \lv a|_{G_e}\rv^2,
\]
giving the first inequality. Let $\{e_\mu\}$ be an approximate identity in $C_c(G_e)$ for $\Cst(G_e)$ such that $\lv e_\mu \rv \le 1$ for all $\mu$ \cite[Proposition~1.49]{Dana's book}. Then $\lv i(e_\mu) \rv_{\Cst(G_e)} \le 1$, and hence
\begin{align*}
\lv L_a\rv^2
&\ge \big\lv L_a(i(e_\mu)) \big\rv_{\Cst(G_e)}^2\\
&= \lv a i(e_\mu)\rv_{\Cst(G_e)}^2\\
&= \lv e_\mu^*\langle a\,,\, a\rangle_{\Cst(G_e)}e_\mu\rv\\
&= \Big\lv e_\mu^*\Big( \sum_{\gamma\in\Gamma} a_\gamma^*a_\gamma\Big) e_\mu \Big\rv \\
&\to \Big\lv \sum_{\gamma\in\Gamma} a_\gamma^*a_\gamma \Big\rv
= \lv a\rv^2_{\Cst(G_e)}.
\end{align*}
Thus $\lv a\rv_{\Cst(G_e)}\le \lv L_a\rv$, giving the second inequality. Thus restriction extends to a linear contraction $Q_\ff\colon \Cst(G) \to \Cst(G_e)$, as claimed.

Next, we will show that $\lv a|_{\Cst(G_e)}\rv_\rr\le \lv a\rv_\rr$. Fix $u \in G^{(0)}$. Let $W$ be the orthogonal projection of $L^2(Gu, \lambda_u)$ onto $L^2((G_e)u, \lambda_u)$. A calculation shows that $\pi_u^e(a|_{G_e}) = W\pi_u(a)W$.
Thus
\[
\big\lv \pi_u^e\big(a|_{G_e}\big) \big\rv = \big\lv W \pi_u(a) W\big\rv \le \lv \pi_u(a) \rv,
\]
since $W$ is a projection. Since $u$ was fixed, this gives $\big\lv a|_{G_e} \big\rv_r \le \lv a \rv_r$. Thus restriction extends to a linear contraction $Q_\rr\colon \Cst_\rr(G) \to \Cst_\rr(G_e)$.

Since $q^{G_e}\circ Q_\ff$ and $Q_\rr\circ q^G$ agree on $C_c(G)$, it follows by continuity that $q^{G_e}\circ Q_\ff = Q_\rr\circ q^G$.
\end{proof}

\begin{proof}[Proof of \cref{cor-app}]
Let $\alpha,\beta\in\Gamma$ and let $a, b \in C_c(G)$ with $\supp(a) \subset G_\alpha$ and $\supp(b) \subset G_\beta$. For $x \in G$, if $(a*b)(x)\ne 0$, then there exists $y\in\supp(a)$ such that $y^{-1}x \in \supp(b)$; that is, $x \in \supp(a)\supp(b)\subset G_\alpha G_\beta\subset G_{\alpha\beta}$. It follows that $A_\alpha A_\beta\subset A_{\alpha\beta}$ and $B_\alpha B_\beta\subset B_{\alpha\beta}$. Further, $a^*$ has support in $G_{\alpha^{-1}}$, and it follows that $A_\alpha^* = A_{\alpha^{-1}}$ and $B_\alpha^* = B_{\alpha^{-1}}$.

We write $a = \sum_{\gamma \in \Gamma} a_\gamma$ where $a_\gamma\coloneqq a|_{G_\gamma}$. There is a finite subset $F$ of $\Gamma$ such that $a_\gamma\ne 0$ implies $\gamma \in F$. Then $a = \sum_{\gamma \in \Gamma} a_\gamma = \sum_{\gamma \in F} i_\gamma(a_\gamma)$, where each $i_\gamma$ is the inclusion map from $C_c(G_\gamma)$ to $C_c(G)$, and so each $i_\gamma(a_\gamma)$ is an element of $C_c(G)$ with support in $G_\gamma$. Thus $C_c(G)$ is a subset of both $\lsp \{A_\gamma : \gamma \in \Gamma\}$ and $\lsp \{B_\gamma : \gamma \in \Gamma\}$, which are thus dense in $\Cst_\rr(G)$ and $\Cst(G)$.

\Cref{prop-QP} gives linear contractions $Q_\rr\colon \Cst_\rr(G) \to \Cst_\rr(G_e)$ and $Q_\ff\colon \Cst(G) \to \Cst(G_e)$ such that $Q_\rr(a) = a|_{G_e} = Q_\ff(a)$ for $a \in C_c(G)$. Composing with the isometric homomorphisms $i_\rr\colon \Cst_\rr(G_e)\to \Cst_\rr(G)$ and $i_\ff\colon \Cst(G_e)\to \Cst(G)$ of \cref{thm-inclusions} gives contractions $P_\rr\coloneqq i_\rr\circ Q_\rr\colon\Cst_\rr(G)\to \Cst_\rr(G)$ and $P_\ff\coloneqq i_\ff\circ Q_\ff\colon\Cst(G)\to \Cst(G)$. In particular, $P_\rr = \id$ on $A_e$ and $P_\rr = 0$ on $A_\gamma$ when $\gamma\ne e$; similarly, $P_\ff = \id$ on $B_e$ and $P_\ff = 0$ on $B_\gamma$ when $\gamma\ne e$. It follows that $P_\rr$ and $P_\ff$ are conditional expectations, and that $\Cst_\rr(G)$ and $\Cst(G)$ are topologically graded.
\end{proof}

\begin{rmk}
Let $G$ be an \'etale groupoid and let $\Gamma$ be an amenable group with a unital subsemigroup $S$. Suppose that $c\colon G \to \Gamma$ is an unperforated cocycle. Then \cite[Theorem~4.6]{CF2021} finds conditions on $G$ and $c$ that are equivalent to realising $\Cst_\rr(G)$ naturally as a covariance algebra of a product system over $S$ in the sense of \cite{Sehnem2019}. In that case $\Cst_\rr(G)$ is topologically graded.\end{rmk}

Recall that a conditional expectation $P$ is \emph{faithful} if $P(a^*a) = 0$ implies $a = 0$. If $G$ is \'etale, then restriction of functions to $G^{(0)}$ induces a faithful conditional expectation $E_\rr\colon \Cst_\rr(G) \to \Cst_\rr(G)$ (see, for example, the footnote on page~116 of \cite{RRS}), and it follows that $P_\rr$ is also faithful because
\[
0 = P_\rr(a^*a) \implies 0 = (E_\rr \circ P_\rr)(a^*a) = E_\rr(a^*a) \implies a = 0.
\]
Note that if $\Cst(G) \ne \Cst_\rr(G)$, then the conditional expectation $E\colon \Cst(G) \to \Cst(G)$ induced by restriction of functions to $G^{(0)}$ is not faithful, and neither is the conditional expectation $P_\ff\colon \Cst(G) \to \Cst(G)$. In the non-\'etale setting, we do not know whether or not $P_\rr$ is faithful. We investigate further.

\begin{prop} \label{prop-tilde L}
Let $X$ and $Y$ be the right-Hilbert bimodules of \cref{cor-right-Hilbert}, let $q\colon X\to Y$ be the quotient map, and let $L\colon \Cst(G)\to\Ll(X)$ be the homomorphism of \cref{prop-L}.
\begin{enumerate}
\item There are homomorphisms $\pi\colon \Ll(X)\to \Ll(Y)$ and $\widetilde L\colon\Cst_\rr(G)\to \Ll(Y)$ such that $\pi(T)(q(x)) = q(T(x))$ for $T \in \Ll(X)$ and $x \in X$, and $\pi\circ L = \widetilde{L}\circ q^G$. In particular, $\widetilde{L}_a(b) = a * b$ for all $a, b \in C_c(G)$.
\item The homomorphism $\widetilde L$ is isometric on $A_e$.
\end{enumerate}
\end{prop}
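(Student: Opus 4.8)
The plan is to do the real work in part~(1) and then deduce part~(2) by a short norm estimate modelled on the proof of \cref{prop-part1}. For part~(1) I would first construct $\pi$ directly from \cref{lem-op-descends}. By \cref{cor-right-Hilbert} we have $Y = X/X_{\ker q^{G_e}}$ with $q$ the quotient map, and \cref{lem-op-descends} (applied with $A=\Cst(G_e)$ and $J=\ker q^{G_e}$) says every $T\in\Ll(X)$ satisfies $T(X_{\ker q^{G_e}})\subset X_{\ker q^{G_e}}$. Since $\ker q=X_{\ker q^{G_e}}$, the operator $T$ descends to a well-defined linear map $\pi(T)$ on $Y$ with $\pi(T)(q(x))=q(T(x))$. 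Using that the quotient inner product is $\langle q(x)\,,\,q(y)\rangle_{\Cst_\rr(G_e)}=q^{G_e}(\langle x\,,\,y\rangle_{\Cst(G_e)})$, a one-line check gives $\langle \pi(T)q(x)\,,\,q(y)\rangle = \langle q(x)\,,\,\pi(T^*)q(y)\rangle$, so $\pi(T)$ is adjointable with $\pi(T)^*=\pi(T^*)$; hence $\pi(T)\in\Ll(Y)$ and $\pi\colon\Ll(X)\to\Ll(Y)$ is a $*$\nb-homomorphism, and so contractive. Composing, $\pi\circ L\colon\Cst(G)\to\Ll(Y)$ is a homomorphism with $\pi(L_a)(q(b))=q(a*b)$ for $a,b\in C_c(G)$.

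The substantive step is to factor $\pi\circ L$ through $q^G$. As a $*$\nb-homomorphism is contractive, it suffices to prove the estimate $\lv\pi(L_a)\rv_{\Ll(Y)}\le\lv a\rv_\rr$ for $a\in C_c(G)$, for then $\ker q^G\subset\ker(\pi\circ L)$ and $\pi\circ L=\widetilde L\circ q^G$ for a unique homomorphism $\widetilde L\colon\Cst_\rr(G)\to\Ll(Y)$, which automatically satisfies $\widetilde L_a(b)=a*b$ for $a,b\in C_c(G)$. To prove the estimate I would use induced representations. Fix $v\in G^{(0)}$, let $\pi_v^e$ be the regular representation of $\Cst_\rr(G_e)$ on $L^2((G_e)v,\lambda_v)$, and form the internal tensor product $Y\otimes_{\Cst_\rr(G_e)}L^2((G_e)v,\lambda_v)$. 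Because $\bigoplus_v\pi_v^e$ is a faithful representation of $\Cst_\rr(G_e)$ (it defines the reduced norm), the induced map $T\mapsto\bigoplus_v(T\otimes 1)$ is an isometric $*$\nb-homomorphism on $\Ll(Y)$, so $\lv T\rv=\sup_v\lv T\otimes 1\rv$. I would then check that $b\otimes h\mapsto b*h$ extends to an isometry of $Y\otimes_{\Cst_\rr(G_e)}L^2((G_e)v,\lambda_v)$ onto a closed subspace $M_v\subset L^2(Gv,\lambda_v)$; since $a*(b*h)=(a*b)*h$, the subspace $M_v$ is invariant under the regular representation $\pi_v(a)$, and this isometry carries $\pi(L_a)\otimes 1$ to the restriction $\pi_v(a)|_{M_v}$. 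Hence $\lv\pi(L_a)\otimes 1\rv\le\lv\pi_v(a)\rv\le\lv a\rv_\rr$, and taking the supremum over $v$ gives the estimate.

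I expect the induced-representation identification to be the main obstacle. The nontrivial point is that $b\otimes h\mapsto b*h$ is isometric, which amounts to the identity $\langle b*h\,,\,b'*h'\rangle_{L^2(Gv)}=\langle h\,,\,\pi_v^e(\langle b\,,\,b'\rangle_{\Cst_\rr(G_e)})h'\rangle$; this is a Fubini-type computation using the decomposition $Gv=\bigsqcup_{\gamma}(G_\gamma)v$ together with the fact that $h$ is supported on $(G_e)v$, so only the component $b_{c(x)}$ contributes to $(b*h)(x)$. I expect this inner-product identity, rather than surjectivity, to be delicate; indeed I do not expect $M_v=L^2(Gv,\lambda_v)$ in general, which is precisely why $\widetilde L$ need not be isometric on all of $\Cst_\rr(G)$ and why part~(2) is restricted to $A_e$.

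Granted part~(1), I would prove part~(2) by testing on a single vector, as in \cref{prop-part1}. For $g\in C_c(G_e)$ the inner product collapses to its $e$\nb-component, giving $\lv q(i(g))\rv_Y=\lv g\rv_\rr$. Fix $f\in C_c(G_e)\setminus\{0\}$; then $q(i(f)^*)=q(i(f^*))$ has norm $\lv f\rv_\rr$ in $Y$, and since $q^G(i(f))=i_\rr(q^{G_e}(f))\in A_e$ with $\widetilde L_{q^G(i(f))}=\pi(L_{i(f)})$, testing on this vector gives
\[
\big\lv\widetilde L_{q^G(i(f))}\big\rv\ge\lv q(i(f)*i(f)^*)\rv_Y/\lv f\rv_\rr=\lv q(i(ff^*))\rv_Y/\lv f\rv_\rr=\lv ff^*\rv_\rr/\lv f\rv_\rr=\lv f\rv_\rr.
\]
Combined with the upper bound $\lv\widetilde L_{q^G(i(f))}\rv\le\lv q^G(i(f))\rv_\rr=\lv f\rv_\rr$ from part~(1) (using that $i_\rr$ is isometric by \cref{prop-part2}), this forces equality. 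As $\{q^G(i(f)):f\in C_c(G_e)\}$ is dense in $A_e$ and $\widetilde L$ is continuous, $\widetilde L$ is isometric on $A_e$.
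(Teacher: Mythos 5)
Your proposal is correct, but it reaches both conclusions by a genuinely different route than the paper, so a comparison is in order. The construction of $\pi$ via \cref{lem-op-descends} and the Rieffel correspondence is exactly the paper's. Where you diverge is in the factorisation $\pi\circ L = \widetilde L\circ q^G$: the paper gives a soft kernel argument, computing $\langle L_a(b)\,,\,L_a(b)\rangle_{\Cst(G_e)} = Q_\ff(b^*a^*ab)$ for $a\in\ker q^G$ and invoking the intertwining $q^{G_e}\circ Q_\ff = Q_\rr\circ q^G$ of \cref{prop-QP} to get $q(L_a(b))=0$, whereas you prove the quantitative estimate $\lv\pi(L_a)\rv\le\lv a\rv_\rr$ by Rieffel induction, embedding $Y\otimes_{\Cst_\rr(G_e)}L^2((G_e)v,\lambda_v)$ isometrically into $L^2(Gv,\lambda_v)$ via $b\otimes h\mapsto b*h$. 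Your key inner-product identity does hold, by precisely the computation you sketch: since $h,h'$ are supported in $(G_e)v$, one has $\langle b*h\,,\,b'*h'\rangle_{L^2(Gv)}=\langle h\,,\,\pi_v^e\big(\sum_{\gamma}b_\gamma^*b'_\gamma\big)h'\rangle=\langle h\,,\,\pi_v^e(\langle b\,,\,b'\rangle)h'\rangle$, and the standard fact that $T\mapsto T\otimes 1$ is isometric on $\Ll(Y)$ for a faithful representation of the coefficient algebra completes the bound; this spatial picture is close in spirit to \cite[Lemma~8.1]{BEM} and to the disintegration $Gv=\bigsqcup_\gamma (G_\gamma)v$ the paper uses in \cref{prop-part2}, and it has the merit of exhibiting $\widetilde L$ concretely as a compression of the regular representations, which illuminates why isometry can fail off $A_e$. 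For part (2) the paper instead proves $\ker(\pi\circ L)\cap B_e\subset\ker q^G$ by an approximate identity computation, namely $e_\mu^*\,i_\ff^{-1}(a)\,e_\mu = \langle i(e_\mu)\,,\,L_a(i(e_\mu))\rangle_{\Cst(G_e)}\in\ker q^{G_e}$, i.e., it shows $\widetilde L|_{A_e}$ is an injective homomorphism of $\Cst$\nb-algebras and hence isometric; your direct norm computation testing on $q(i(f^*))$, modelled on \cref{prop-part1} and using the isometry of $i_\rr$ from \cref{prop-part2} for the upper bound, is equally valid and avoids the approximate identity, at the cost of leaning on \cref{prop-part2} (no circularity, since that result is proved independently). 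One small repair: in part (2) you divide by $\lv f\rv_\rr$, so you should either note that the reduced norm is a genuine norm on $C_c(G_e)$, or simply observe that when $q^{G_e}(f)=0$ the desired inequality $\lv\widetilde L_{q^G(i(f))}\rv\ge\lv q^G(i(f))\rv_\rr$ is vacuous because $q^G(i(f))=0$ by \cref{prop-part2}.
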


\begin{proof}
By \cref{lem-op-descends}, $\pi(T)$ is well-defined by the formula $\pi(T)(q(x)) = q(T(x))$. It follows from the Rieffel correspondence (in particular, see \cite[bottom line of page~55]{tfb}) that $\pi$ is $*$-preserving, and it is straightforward to check that $\pi$ is a homomorphism. We claim that $\ker q^G\subset \ker (\pi\circ L)$. It then follows that there is a homomorphism $\widetilde L\colon\Cst_\rr(G)\to \Ll(Y)$ such that $\pi\circ L = \widetilde{L}\circ q^G$.

For the claim, let $a \in \ker q^G$. Let $Q_\ff\colon \Cst(G)\to\Cst(G_e)$ and $Q_\rr\colon \Cst_\rr(G)\to\Cst_\rr(G_e)$ be the contractions of \cref{prop-QP}. Let $b \in C_c(G)$ and notice that we have $Q_\ff(b^*b) = \langle b\,,\, b\rangle_{\Cst(G_e)}$. Let $\{a_n\}\subset C_c(G)$ such that $a_n\to a$ in $\Cst(G)$. Then
\[
\langle L_a(b)\,,\, L_a(b)\rangle_{\Cst(G_e)} = \lim_{n\to\infty} \langle L_{a_n}(b)\,,\, L_{a_n}(b)\rangle_{\Cst(G_e)} = \lim_{n\to\infty}Q_\ff((a_nb)^*(a_nb)) = Q_\ff(b^*a^*ab)
\]
because all the operations involved are continuous. Since $b^*a^*ab\in\ker q^G$ we have
\[
(q^{G_e}\circ Q_\ff)(b^*a^*ab) = (Q_\rr\circ q^G)(b^*a^*ab) = 0,
\]
by \cref{prop-QP}. Thus $\langle L_a(b)\,,\, L_a(b)\rangle_{\Cst(G_e)} = Q_\ff(b^*a^*ab)\in\ker q^{G_e}$. Now
\begin{align*}
\lv\pi(L_a)(q(b))\rv^2_{\Cst_\rr(G_e)} &= \lv q(L_a(b))\rv^2_{\Cst_\rr(G_e)}\\
&= \big\lv\langle q(L_a(b))\,,\, q(L_a(b))\rangle_{\Cst_\rr(G_e)}\rv_\rr\\
\intertext{which, using the Rieffel correspondence again, is}
&= \lv q^{G_e}\big(\langle L_a(b)\,,\, L_a(b)\rangle_{\Cst(G_e)}\big)\rv_\rr = 0.
\end{align*}
Thus $\pi(L_a)(b) = 0$ for all $b \in C_c(G)$. Since $C_c(G)$ is dense in $X$ we have $\pi(L_a) = 0$, and $a \in \ker (\pi\circ L)$. This gives $\widetilde L\colon\Cst_\rr(G)\to \Ll(Y)$ such that $\pi\circ L = \widetilde{L}\circ q^G$. In particular, $\widetilde{L}_a(b) = a*b$ for all $a, b \in C_c(G)$.

To see that $\widetilde{L}$ is isometric on $A_e$ it suffices (by \cref{prop-tilde L}) to show that $\ker (\pi\circ L)\cap B_e\subset \ker q^G$. Fix $a \in \ker(\pi\circ L)\cap B_e$. Let $b \in C_c(G)$. Then $0 = \pi(L_a)(q(b)) = q(L_a(b))$; that is, $L_a(b) \in X_{\ker q^{G_e}}$. Recall from \cref{prop-QP} that restriction extends to a contraction $Q_\rr\colon \Cst_\rr(G) \to \Cst_\rr(G_e)$. Since $G$ has paracompact unit space, there exists an approximate identity $\{ e_\mu\}$ for $C_c(G_e)$ in the inductive limit topology by \cite[Proposition~1.49]{Dana's book}. We have
\[
e_\mu^*i_\ff^{-1}(a) e_\mu = Q_\ff\big( i(e_\mu^*)a i(e_\mu)\big) = \big\langle i(e_\mu)\,,\, L_a( i(e_\mu))\big\rangle_{\Cst(G_e)}\in\ker q^{G_e},
\]
since $L_a( i(e_\mu)) \in X_{\ker q^{G_e}}$. By taking limits we get that $i_\ff^{-1}(a) \in \ker q^{G_e}$, and then $a \in i_\ff(\ker q^{G_e})\subset \ker q^G$. Now $\widetilde{L}|_{A_e}$ is isometric.
\end{proof}

\begin{lem} \label{lemma-kernel}
Let $L\colon \Cst(G)\to \Ll(X)$ and $\widetilde{L}\colon \Cst_\rr(G)\to \Ll(Y)$ be the homomorphisms of \cref{prop-L,prop-tilde L}. We have
\[
\ker\widetilde{L} = \{a \in \Cst_\rr(G) : P_\rr(a^*a) = 0\} \quad\text{and}\quad \ker L = \{a \in \Cst(G) : P_\ff(a^*a) = 0\}.
\]
\end{lem}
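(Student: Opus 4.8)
The plan is to prove the identity for $L$ and $P_\ff$ in full; the identity for $\widetilde L$ and $P_\rr$ then follows by the verbatim argument with $(X, L, Q_\ff, P_\ff, B_\gamma)$ replaced by $(Y, \widetilde L, Q_\rr, P_\rr, A_\gamma)$, using that $i_\rr$ is isometric (\cref{thm-inclusions}) together with the reduced grading of \cref{cor-app}. First I would introduce a comparison map. Since $Q_\ff$ is a contraction (\cref{prop-QP}), the bound $\lv\langle a, a\rangle_{\Cst(G_e)}\rv = \lv Q_\ff(a^*a)\rv \le \lv a\rv^2$ for $a \in C_c(G)$ shows that the inclusion $C_c(G) \hookrightarrow X$ extends to a contraction $\kappa\colon \Cst(G) \to X$ with $\langle\kappa(a), \kappa(a)\rangle_{\Cst(G_e)} = Q_\ff(a^*a)$ for all $a \in \Cst(G)$. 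Writing $N \coloneqq \{a \in \Cst(G) : P_\ff(a^*a) = 0\}$, injectivity of $i_\ff$ gives $N = \ker\kappa$. A routine continuity argument from $L_a(b) = a*b$ on $C_c(G)$ shows $\kappa(ab) = L_a(\kappa(b))$ for all $a, b \in \Cst(G)$; as $\kappa(C_c(G))$ is dense in $X$, this yields $L_a = 0$ if and only if $ab \in N$ for all $b$, that is, $\ker L = \{a : a\,\Cst(G) \subseteq N\}$. The whole proof thus reduces to showing that $N$ is a \emph{two-sided} ideal, for then $N \subseteq \ker L$, while $\ker L \subseteq N$ is clear.

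The containment $\ker L \subseteq N$ is immediate: taking an approximate identity $\{e_\mu\} \subset C_c(G)$ for $\Cst(G)$ with $\lv e_\mu\rv \le 1$ (\cite[Proposition~1.49]{Dana's book}), any $a \in \ker L$ satisfies $a e_\mu \in N$ and $a e_\mu \to a$, so $\kappa(a) = \lim_\mu \kappa(a e_\mu) = 0$. That $N$ is a left ideal is a routine positivity computation: for $b \in \Cst(G)$ we have $a^* b^* b a \le \lv b\rv^2 a^*a$, and applying the positive map $Q_\ff$ gives $Q_\ff((ba)^*(ba)) \le \lv b\rv^2 Q_\ff(a^*a)$, so $a \in N$ implies $ba \in N$.

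The substantive point is that $N = N^*$, and this is the only step that \emph{genuinely} uses the group grading rather than an arbitrary conditional expectation (for a general expectation $E$ the set $\{a : E(a^*a) = 0\}$ need not be two-sided, e.g.\ for a non-tracial vector state on $M_2$). Here I would exploit the grading of \cref{cor-app}. The fibres are mutually orthogonal in $X$: for $a, b \in C_c(G)$ with $\supp a \subset G_\gamma$ and $\supp b \subset G_\delta$, $\gamma \ne \delta$, the function $a^*b$ is supported off $G_e$, so $\langle\kappa(a), \kappa(b)\rangle_{\Cst(G_e)} = Q_\ff(a^*b) = 0$. Hence $X = \bigoplus_\gamma X_\gamma$ with $X_\gamma \coloneqq \overline{\kappa(B_\gamma)}$ and orthogonal projections $P_\gamma$, and $\kappa(a) = 0$ if and only if $P_\gamma\kappa(a) = 0$ for every $\gamma$. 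Moreover $\kappa$ is isometric on each $B_\gamma$, since for $x \in B_\gamma$ we have $x^*x \in B_e$ and therefore $\lv\kappa(x)\rv^2 = \lv Q_\ff(x^*x)\rv = \lv x^*x\rv = \lv x\rv^2$ because $i_\ff$ is isometric. Combining these with the fact that the involution carries the $\gamma$-graded part of $a$ isometrically onto the $\gamma^{-1}$-graded part of $a^*$, I obtain $\lv P_\gamma\kappa(a)\rv = \lv P_{\gamma^{-1}}\kappa(a^*)\rv$, first for $a \in C_c(G)$ and then for all $a \in \Cst(G)$ by continuity. Consequently $\kappa(a) = 0$ if and only if $\kappa(a^*) = 0$; that is, $N = N^*$.

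Finally, $N$ being a left ideal with $N = N^*$ forces $N$ to be two-sided, and together with $\ker L = \{a : a\,\Cst(G) \subseteq N\}$ this gives $\ker L = N$, as required; the reduced identity follows identically, and since faithfulness of the expectations is never invoked, the argument is insensitive to the open question of whether $P_\rr$ is faithful. I expect the verification of $N = N^*$ to be the main obstacle: one must set up the orthogonal fibre decomposition of the Hilbert module carefully and track precisely how the involution permutes the grading subspaces, whereas the two ideal properties and the inclusion $\ker L \subseteq N$ are routine.
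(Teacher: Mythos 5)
Your proof is correct, but it takes a genuinely different route from the paper's. The paper proves, for $b \in C_c(G)$ supported in a single fibre $G_\alpha$, the identity $i_\rr\big(\langle b\,,\widetilde{L}_a(b)\rangle_{\Cst_\rr(G_e)}\big) = b^*P_\rr(a)b$ (an idea the authors credit to \cite[Proposition~17.12]{Exel's book}); replacing $a$ by $a^*a$ turns this into $b^*P_\rr(a^*a)b = i_\rr\big(\langle \widetilde{L}_a(b)\,,\widetilde{L}_a(b)\rangle_{\Cst_\rr(G_e)}\big)$, and a short chain of equivalences, closed off by an approximate identity, yields both inclusions of the kernel identity at once --- no ideal theory for $N \coloneqq \{a : P_\rr(a^*a)=0\}$ is needed, and the $*$-invariance of $N$ falls out as a corollary rather than serving as an input. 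You instead factor everything through the comparison map $\kappa\colon \Cst(G)\to X$, identify $N = \ker\kappa$ and $\ker L = \{a : a\,\Cst(G)\subseteq N\}$, and reduce the lemma to two-sidedness of $N$, obtained from $N = N^*$ via the orthogonal fibre decomposition of $X$ and the $\Cst$\nb-identity transported through the isometric $i_\ff$; your norm equality $\lv a_\gamma^*a_\gamma\rv_{\Cst(G_e)} = \lv a_\gamma a_\gamma^*\rv_{\Cst(G_e)}$ genuinely requires this detour through $\Cst(G)$, exactly as you take it, since $a_\gamma$ is not itself an element of $\Cst(G_e)$. Your route is longer but more structural: it isolates the fact that the grading is what forces $\{a : P_\ff(a^*a)=0\}$ to be a two-sided ideal (your $M_2$ example for general expectations is apt), a fact the paper only extracts after the lemma, in \cref{prop-bundles A and B}, and in the reverse logical direction (there the set is an ideal \emph{because} it equals $\ker L$). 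Two small points you should write out: positivity of $Q_\ff$, needed when you apply it to $a^*b^*ba \le \lv b\rv^2 a^*a$, follows from $Q_\ff(d^*d) = \langle \kappa(d)\,,\kappa(d)\rangle_{\Cst(G_e)} \ge 0$ --- or can be bypassed entirely, since $\kappa(ba) = L_b(\kappa(a))$ makes $\ker\kappa$ a left ideal with no computation; and the coordinate projections $P_\gamma$ are adjointable because on the dense algebraic direct sum one has $\langle P_\gamma x\,, y\rangle = \langle x\,, P_\gamma y\rangle$ and $\langle P_\gamma x\,, P_\gamma x\rangle \le \langle x\,, x\rangle$. With those lines added the argument is complete, and the reduced case does run verbatim with $(Y, \widetilde{L}, Q_\rr, i_\rr, A_\gamma)$, since \cref{prop-tilde L} supplies $\widetilde{L}_a(b) = a*b$ on $C_c(G)$ and \cref{prop-part2} the isometry of $i_\rr$ onto $A_e$.
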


\begin{proof}
We prove $\ker\widetilde{L} = \{a \in \Cst_\rr(G) : P_\rr(a^*a) = 0\}$; for $\ker L$, just replace $P_\rr$ by $P_\ff$ and $\widetilde{L}$ by $L$ in the argument below.

By \cref{prop-part2}, inclusion extends to a homomorphism $i_\rr\colon \Cst_\rr(G_e) \to \Cst_\rr(G)$. Fix $\alpha \in \Gamma$. We start by showing that for all $b \in C_c(G)$ with support in $G_\alpha$ and $a \in \Cst_\rr(G)$, we have
\begin{equation} \label{eq-Ruy}
i_\rr\big(\langle b\,,\widetilde{L}_a(b)\rangle_{\Cst_\rr(G_e)}\big) = b^*P_\rr(a)b.
\end{equation}
(We got the idea for this from \cite[Proposition~17.12]{Exel's book}.) First let $a,b \in C_c(G)$ with $\supp(b)\subset G_\alpha$. Then
\[
\langle b\,,\widetilde{L}_a(b)\rangle_{\Cst_\rr(G_e)} = \langle b\,,\\ab\rangle_{\Cst_\rr(G_e)} = \sum_{\gamma\in\Gamma}b_\gamma^*(ab)_\gamma = b_\alpha^*(ab)_\alpha = b_\alpha^*a_eb_\alpha
\]
and $i_\rr(b_\alpha^*a_eb_\alpha) = b^*P_\rr(a)b$.
Now consider $a \in \Cst_\rr(G)$ and choose $\{a_n\}\subset C_c(G)$ such that $a_n\to a$. Then
\[
i_\rr\big(\langle b\,,\widetilde{L}_a(b)\rangle_{\Cst_\rr(G_e)}\big) = \lim_{n\to\infty}i_\rr\big(\langle b\,,\widetilde{L}_{a_n}(b)\rangle_{\Cst_\rr(G_e)}\big) = \lim_{n\to\infty}b^*P_\rr(a_n)b = b^*P_\rr(a)b,
\]
giving \labelcref{eq-Ruy}. To see that $\{a \in \Cst_\rr(G) : P_\rr(a^*a) = 0\} = \ker\widetilde{L}$, we observe that
\begin{align*}
b^*P_\rr(a^*a)b = i_\rr\big(\langle b\,,\widetilde{L}_{a^*a}(b)\rangle_{\Cst_\rr(G_e)}\big) = i_\rr\big(\langle \widetilde{L}_{a}(b)\,,\widetilde{L}_{a}(b)\rangle_{\Cst_\rr(G_e)}\big).
\end{align*}
Thus
\begin{align*}
\widetilde{L}_{a} = 0 &\iff \widetilde{L}_{a}(b) = 0 \text{ for all $b \in C_c(G)$}\\
&\iff \widetilde{L}_{a}(b) = 0 \text{ for all $b \in C_c(G)$ with $\supp(b)\subset G_\alpha$ for some $\alpha\in\Gamma$}\\
&\iff 0 = b^*P_\rr(a^*a)b \text{ for all $b \in C_c(G)$ with $\supp(b)\subset G_\alpha$ for some $\alpha\in\Gamma$}\\
&\iff 0 = P_\rr(a^*a),
\end{align*}
by using an approximate identity contained in $C_c(G)$ at the last step.
\end{proof}

Notice that the collections $\Aa$ and $\Bb$ of \cref{cor-app} are Fell bundles, in the sense of \cite[Definitions~16.25 and 17.6]{Exel's book}. If $P_\rr$ is faithful, then by \cite[Proposition~19.8]{Exel's book} the reduced $\Cst$\nb-algebra $\Cst_\rr(G)$ is isomorphic to the reduced $\Cst$\nb-algebra $\Cst_\rr(\Aa)$ of the Fell bundle $\Aa$. However, in general, all we can say is in the following proposition.

\begin{prop} \label{prop-bundles A and B}
Let $L$ and $\widetilde L$ be the homomorphisms of \cref{prop-L,prop-tilde L}, and let $\Aa$ and $\Bb$ be the Fell bundles of \cref{cor-app}. Then $\Cst_\rr(\Aa)$ is isomorphic to the range of $\widetilde L$, $\Cst_\rr(\Bb)$ is isomorphic to the range of $L$, and $\Cst(\Bb)$ is isomorphic to $\Cst(G)$.
\end{prop}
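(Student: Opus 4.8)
The plan is to prove the three isomorphisms separately, using the Fell-bundle machinery from \cite{Exel's book} together with the homomorphisms $L$, $\widetilde{L}$, and the conditional expectations $P_\rr$, $P_\ff$ already constructed. The key conceptual tool is Exel's characterisation of the reduced cross-sectional algebra of a Fell bundle as the image of a regular representation whose kernel is exactly $\{a : E(a^*a)=0\}$ for the canonical conditional expectation $E$ onto the unit fibre. Since \cref{lemma-kernel} identifies $\ker\widetilde{L}$ and $\ker L$ precisely as $\{a : P_\rr(a^*a)=0\}$ and $\{a : P_\ff(a^*a)=0\}$ respectively, the strategy is to match these kernels with the kernels of the canonical reduced representations of $\Aa$ and $\Bb$, and then appeal to the first isomorphism theorem.

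\textbf{The reduced isomorphisms.} For $\Cst_\rr(\Aa)$, I would first observe that, by \cref{cor-app}, $\Aa$ is a Fell bundle whose cross-sectional $\Cst$\nb-algebra $\Cst(\Aa)$ admits a canonical (full) quotient map onto $\Cst(G)$, since the grading subspaces $\{A_\gamma\}$ densely span $\Cst_\rr(G)$ and satisfy the Fell-bundle axioms; the universal property of $\Cst(\Aa)$ (see \cite[Definition~17.6]{Exel's book}) produces a surjective homomorphism $\Cst(\Aa)\to\Cst_\rr(G)$. By \cite[Proposition~19.8]{Exel's book}, the reduced algebra $\Cst_\rr(\Aa)$ is the image of the left-regular representation of $\Cst(\Aa)$, whose kernel is $\{c : E_\Aa(c^*c)=0\}$ for the canonical conditional expectation $E_\Aa$. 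The plan is to show that under the identification of the dense subalgebra with $C_c(G)$, the conditional expectation $P_\rr$ corresponds to $E_\Aa$ (both are induced by restriction to $G_e$), so the two kernels coincide with $\ker\widetilde L$ by \cref{lemma-kernel}. Then the first isomorphism theorem gives $\Cst_\rr(\Aa)\cong\Cst_\rr(G)/\ker\widetilde L\cong\operatorname{range}\widetilde L$. The argument for $\Cst_\rr(\Bb)\cong\operatorname{range}L$ is formally identical, with $\Bb$, $P_\ff$, $L$, and $\Cst(G)$ in place of $\Aa$, $P_\rr$, $\widetilde L$, and $\Cst_\rr(G)$, again invoking \cref{lemma-kernel}.

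\textbf{The full isomorphism.} For $\Cst(\Bb)\cong\Cst(G)$, I would use the universal property of the full cross-sectional algebra directly. The dense subalgebra of $\Cst(\Bb)$ generated by the fibres $\{B_\gamma\}$ is naturally identified with $C_c(G)$, and the canonical map $\Cst(\Bb)\to\Cst(G)$ is surjective by density of $\lsp\{B_\gamma\}$. For injectivity, I would show that every $\Cst$\nb-representation of $\Bb$ (equivalently, every nondegenerate representation of $C_c(G)$ that is bounded on each fibre and respects the bundle structure) extends to a representation of $\Cst(G)$, and conversely; this identifies the universal norms on the common dense subalgebra $C_c(G)$. Concretely, a representation of $\Bb$ restricts on $C_c(G)$ to an $I$\nb-norm-decreasing $*$\nb-representation, hence factors through $\Cst(G)$, so the universal norm of $\Cst(\Bb)$ dominates the $\Cst(G)$-norm; the reverse inequality is immediate since the quotient map $\Cst(\Bb)\to\Cst(G)$ is norm-decreasing. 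Matching universal norms on $C_c(G)$ yields the isomorphism.

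\textbf{Main obstacle.} The principal difficulty lies not in the reduced cases, where \cref{lemma-kernel} does almost all the work, but in carefully verifying that the conditional expectation on $\Cst(\Aa)$ (resp.\ $\Cst(\Bb)$) arising from the Fell-bundle structure genuinely coincides with $P_\rr$ (resp.\ $P_\ff$) under the identification of dense subalgebras---this requires checking that the grading-induced expectation of \cite[Theorem~19.1]{Exel's book} agrees with restriction of functions to $G_e$, which is where the precise bookkeeping of how $i_\ff$, $i_\rr$, and $Q_\ff$, $Q_\rr$ interact becomes essential. For the full isomorphism, the delicate point is ensuring that representations of the Fell bundle $\Bb$ are in bijective, norm-preserving correspondence with representations of $C_c(G)$; I expect this to follow from the fact that $C_c(G)=\lsp\{i_\gamma(C_c(G_\gamma))\}$ is a common dense $*$\nb-subalgebra and that both universal norms are computed as suprema over the same family of $I$\nb-norm-bounded representations, but the identification must be made with care to avoid a spurious discrepancy between the Fell-bundle and groupoid completions.
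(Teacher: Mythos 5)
Your treatment of the two reduced isomorphisms is essentially sound and runs on the same engine as the paper, namely \cref{lemma-kernel}. Where the paper shows that $L|_{B_\gamma}$ is isometric for every $\gamma$, so that $\Cst(G)/\ker L\cong\range L$ is itself topologically graded by $\Bb$ with a faithful conditional expectation induced by $P_\ff$, and then applies \cite[Proposition~19.8]{Exel's book} to that quotient, you instead compare two maps out of $\Cst(\Bb)$: the regular representation $\Lambda\colon\Cst(\Bb)\to\Cst_\rr(\Bb)$ and the composite $L\circ\sigma$, where $\sigma\colon\Cst(\Bb)\to\Cst(G)$ is the canonical surjection, matching their kernels through the vanishing conditions $E(c^*c)=0$ and $P_\ff(\sigma(c^*c))=0$. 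That comparison does work (both expectations are contractive, they agree on the dense subalgebra $\bigoplus_\gamma B_\gamma$, and the copy of $B_e$ inside $\Cst_\rr(\Bb)$ carries the same norm as $B_e\subset\Cst(G)$), and it has the mild advantage of not needing the isometry of $L|_{B_\gamma}$; just be precise about the bookkeeping you yourself flag: the kernels being matched live in $\Cst(\Bb)$ (resp.\ $\Cst(\Aa)$), so the statement to prove is $\ker\Lambda=\sigma^{-1}(\ker L)$, not a literal identification with $\ker L$.

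The genuine gap is in the full isomorphism $\Cst(\Bb)\cong\Cst(G)$. The crux of your argument is the claim that the integrated form $\pi(a)=\sum_\gamma\pi_\gamma(a_\gamma)$ of a representation $\{\pi_\gamma\}$ of $\Bb$ is ``$I$\nb-norm-decreasing'' on $C_c(G)$, and this is exactly what you cannot get for free. What is automatic is only a fibrewise bound: after identifying $B_e$ with $\Cst(G_e)$ via \cref{thm-inclusions}, the homomorphism $\pi_e$ is $I$\nb-norm bounded on $C_c(G_e)$, and the $\Cst$\nb-identity gives $\lv\pi_\gamma(a)\rv^2=\lv\pi_e(a^*a)\rv\le\lv a^*a\rv_I\le\lv a\rv_I^2$ for $a\in B_\gamma\cap C_c(G)$. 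Summing these bounds over the finitely many $\gamma\in F$ meeting $\supp(a)$ yields only
\[
\lv\pi(a)\rv\le\sum_{\gamma\in F}\lv a_\gamma\rv_I\le\lvert F\rvert\,\lv a\rv_I,
\]
a bound whose constant depends on the number of grading components of $a$; and one cannot cheaply improve it to constant $1$, since $\sum_{\gamma\in F}\lv a_\gamma\rv_I$ can genuinely exceed $\lv a\rv_I$ (the suprema defining the $I$\nb-norms of the pieces may be attained at different units). Your proposed justification---that ``both universal norms are computed as suprema over the same family of $I$\nb-norm-bounded representations''---presupposes precisely the point at issue, namely that every representation of $\Bb$ integrates to something dominated by the groupoid norms. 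The paper closes this gap by a different mechanism: it establishes only that $\pi$ is continuous in the inductive limit topology (which the fibrewise bounds do give, since functions supported in a fixed compact set meet only finitely many $G_\gamma$), and then uses the fact that an inductive-limit-continuous $*$\nb-homomorphism of $C_c(G)$ is automatically bounded by the full norm---this is Renault's disintegration theorem, available through \cite{Dana's book}. Without that final ingredient (or a substitute for it), your verification of the universal property does not produce the required homomorphism $\Cst(G)\to D$, so the full isomorphism remains unproved.
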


\begin{proof}
The first isomorphism theorem gives an isomorphism of $\Cst(G)/\ker L$ onto the range of $L$. Fix $\gamma \in \Gamma$ and $b \in B_\gamma$. Then $b^*b \in B_e$, and since $L|_{B_e}$ is isometric, we have
\[
\lv L_b \rv^2 = \lv L_b^* L_b \rv = \lv L_{b^*b} \rv = \lv b^* b \rv = \lv b \rv^2.
\]
Hence $L|_{B_\gamma}$ is isometric for all $\gamma \in \Gamma$. It follows that $B_\gamma/\ker L|_{B_\gamma} = B_\gamma$. Thus $\Cst(G)/\ker L$ has grading $\Bb = \{B_\gamma : \gamma \in \Gamma\}$. By \cref{lemma-kernel} we have $\ker L = \{b \in \Cst(G) : P_\ff(b^*b) = 0\}$. Since any element of a $\Cst$\nb-algebra can be written as a sum of positive elements, it follows that there is a conditional expectation $\widetilde P_\ff$ on $\Cst(G)/\ker L$ defined by $\widetilde P_\ff(b+\ker L) \coloneqq P_\ff(b)$. Since $\widetilde P_\ff$ is faithful, $\Cst(G)/\ker L$ (and hence the range of $L$) is isomorphic to $\Cst_\rr(\Bb)$ by \cite[Proposition~19.8]{Exel's book}. Similarly, $\Cst_\rr(\Aa)\cong \range \widetilde{L}$.

Next we verify that $\Cst(G)$ has the universal property of $\Cst(\Bb)$. For each $\gamma\in\Gamma$, let $\id_\gamma\colon B_\gamma\to \Cst(G)$ be the identity map. Then $\{\id_\gamma : \gamma\in\Gamma\}$ is a representation of $\Bb$ in $\Cst(G)$ which generates $\Cst(G)$ in the sense that $\lsp\{\id_\gamma(b) : \gamma \in \Gamma, b \in B_\gamma\}$ is dense in $\Cst(G)$.

Let $\{\pi_\gamma : \gamma \in \Gamma\}$ be a representation of $\Bb$ in a $\Cst$\nb-algebra $D$. We need to show that there is a homomorphism $\pi\colon \Cst(G)\to D$ such that $\pi\circ\id_\gamma = \pi_\gamma$ for all $\gamma\in\Gamma$. For $a \in C_c(G)$, define $\pi\colon C_c(G)\to D$ by $\pi(a) = \sum_{\gamma\in\Gamma}\pi_\gamma(a_\gamma)$. It is straightforward to check that $\pi$ is a $*$-homomorphism. We will argue that $\pi$ is continuous in the inductive limit topology, and hence extends to give a homomorphism $\pi\colon \Cst(G)\to D$. Then since $\pi\circ\id_\gamma$ and $\pi_\gamma$ agree on $C_c(G)$, they agree on $\Cst(G)$.

By \cref{thm-inclusions}, $i_\ff\colon \Cst(G_e)\to B_e$ is an isometric isomorphism, and we can identify $B_e$ with $\Cst(G_e)$. Then $\pi_e\colon \Cst(G_e)\to D$ is a homomorphism on a groupoid $\Cst$\nb-algebra, and hence is $I$-norm bounded on $C_c(G_e)$. We claim that each $\pi_\gamma$ is $I$-norm bounded as well. For $a \in B_\gamma\cap C_c(G)$ we have
\[
\lv\pi_\gamma(a)\rv^2 = \lv\pi_\gamma(a)^*\pi_\gamma(a)\rv = \lv\pi_e(a^*a)\rv \le \lv a^*a\rv_I \le \lv a^*\rv_I\lv a\rv_I = \lv a\rv_I^2
\]
because the $I$-norm is submultiplicative by \cite[page~16]{Dana's book}.

Now suppose that $a, a_n \in C_c(G)$ with support in a compact subset $K$ of $G$ and that $a_n\to a$ as $n\to\infty$. Fix $\epsilon>0$. Write
\[
a = \sum_{\gamma\in\Gamma}a_\gamma \ \text{ and } \ a_n = \sum_{\gamma \in \Gamma} (a_n)_\gamma.
\]
Let $F$ be a finite subset of $\Gamma$ such that if $a_\gamma\ne 0$ or $(a_n)_\gamma\ne 0$, then $\gamma \in F$. For every $\gamma \in F$ we have $(a_n)_\gamma\to a_\gamma$ uniformly on $K$ and hence $\lv(a_n)_\gamma-a_\gamma\rv_I\to 0$. Since $F$ is finite, there exists $N$ such that $n>N$ implies that $\lv(a_n)_\gamma-a_\gamma\rv_I <\epsilon/\lvert F \rvert$ for all $\gamma \in F$. Then for $n>N$ we have
\begin{align*}
\lv\pi(a_n)-\pi(a)\rv &= \Big\lv\pi\Big(\sum_{\gamma \in \Gamma}\big((a_n)_\gamma-a_\gamma\big)\Big)\Big\rv
\le \sum_{\gamma \in F}\big\lv \pi_\gamma\big( (a_n)_\gamma-a_\gamma \big) \big\rv\\
&\le \sum_{\gamma \in F}\big\lv\big( (a_n)_\gamma-a_\gamma \big)\big\rv_I
\le \sum_{\gamma \in F} \epsilon/\lvert F \rvert = \epsilon.
\end{align*}
Thus $\pi$ is continuous in the inductive limit topology and hence extends as claimed. Now $\Cst(G)$ has the universal property of $\Cst(\Bb)$, and hence they are isomorphic.
\end{proof}

\begin{rmk}
That $\Cst_\rr(\Aa)$ and $\Cst_\rr(\Bb)$ are isomorphic to the ranges of $\widetilde L$ and $L$, respectively, is no surprise. For example, by definition, $\Cst_\rr(\Aa)$ is the range of the regular representation $\Lambda\colon \Cst(\Aa)\to \Ll(\ell^2(\Aa))$, where $\ell^2(\Aa)$ is a Hilbert bimodule isomorphic to $Y$ and $\Lambda$ is implemented by left multiplication.
\end{rmk}

\begin{qu}
Suppose that $G$ is \'etale. The conditional expectation $P_\rr$ is faithful on $\Cst_\rr(G)$, and hence $\Cst_\rr(G)\cong\Cst_\rr(\Aa)$ by \cite[Proposition~19.8]{Exel's book}. What happens if $G$ is not \'etale?
\end{qu}

To sum up, we give the following commutative diagram where $\rho$ is implemented by $\{q^G|_{B_\gamma} : \gamma \in \Gamma\}$ and $\pi$ is the homomorphism of \cref{prop-tilde L}. We do not in general know when either of the two bottom horizontal arrows are isomorphisms.
\[
\begin{diagram}
\dgARROWLENGTH=0.8\dgARROWLENGTH
\node{\Cst(\Bb)\cong\Cst(G)}\arrow{e}\arrow{s,r}{\rho}
\node{\range L\cong\Cst_\rr(\Bb)}
\arrow{se,r}{\pi}
\\
\node{\Cst(\Aa)}\arrow{e}
\node{\Cst_\rr(G)}\arrow{e}
\node{\range\widetilde L\cong \Cst_\rr(\Aa).}
\end{diagram}
\]

\vspace{2ex}

\vspace{2ex}
\end{document}